\DeclareMathAlphabet{\mathpzc}{OT1}{pzc}{m}{it}
\newtheorem{thm}{Theorem}[section]
\newtheorem{lem}[thm]{Lemma}
\newtheorem{prop}[thm]{Proposition}
\newtheorem{cor}[thm]{Corollary}
\theoremstyle{definition}
\newtheorem{defn}[thm]{Definition}
\newtheorem{ex}[thm]{Example}
\theoremstyle{remark}
\newcommand{\ok}{\mathfrak{o}}
\newcommand{\zk}{\mathfrak{z}}
\newcommand{\g}{\mathfrak{g}}
\newcommand{\hk}{\mathfrak{h}}
\newcommand{\fk}{\mathfrak{f}}
\newcommand{\qk}{\mathfrak{q}}
\newcommand{\jk}{\mathfrak{j}}
\newcommand\CC{\mathbb C}
\newcommand\NN{\mathbb N}
\newcommand{\io}{\operatorname{\iota}}
\newcommand{\ad}{\operatorname{ad}}
\newcommand{\Der}{\operatorname{Der}}
\renewcommand\hat\widehat
\renewcommand\tilde\widetilde 
\newcommand{\spa}{\operatorname{span}}
\newcommand\Wedge{\bigwedge}
\begin{document}
\title[The Betti numbers for a family of solvable Lie algebras]{The Betti numbers for a family of solvable Lie algebras}

\author{Thanh Minh Duong}

\address{Department of Physics, Ho Chi Minh city University of Pedagogy, 280 An Duong Vuong, Ho Chi Minh city, Vietnam.}
\email{thanhdmi@hcmup.edu.vn}
\footnotetext{The author was supported by the Foundation for Science and Technology Project of Vietnam Ministry of Education and Training.}
\subjclass[2010]{Primary 17B56, 16W25; Secondary 17B30, 22E40}
\keywords{Quadratic Lie algebras, solvable, cohomology, Betti numbers}
\begin{abstract} 
  We give a characterization of symplectic quadratic Lie algebras that their Lie algebra of inner derivations has an invertible derivation. A family of symplectic quadratic Lie algebras is introduced to illustrate this situation. Finally, we calculate explicitly the Betti numbers of a family of solvable Lie algebras in two ways: using the cohomology of quadratic Lie algebras and applying a Pouseele's result on extensions of the one-dimensional Lie algebra by Heisenberg Lie algebras.
\end{abstract}
\maketitle
\section{Introduction}

Let $\g$ be a complex Lie algebra endowed with a non-degenerate invariant symmetric bilinear form $B$, $\{X_1,...,X_n\}$ be a basis of $\g$ and $\{\omega_1,...,\omega_n\}$ be its dual basis. Denote by $\{Y_1,...,Y_n\}$ the basis of $\g$ defined by $B(Y_i,.)=\omega_i$, $1\leq i\leq n$.  Pinczon and Ushirobira discovered in \cite{PU07} that the differential $\partial$ on $\Wedge(\g^*)$, the space of antisymmetric forms on $\g$, is given by $\partial:=-\{I,.\}$ where $I$ is defined by:
\[
I(X,Y,Z)=B([X,Y],Z),\ \ \forall\ X,\ Y,\ Z\in\g
\]
and $\{~,~\}$ is the super Poisson bracket on $\Wedge(\g^*)$ defined by
\[\{\Omega, \Omega '\} = (-1)^{k+1} \sum_{i,j} B(Y_i,Y_j)\io_{X_i}(\Omega)
\wedge \io_{X_j}(\Omega'), \ \forall \ \Omega \in \Wedge{}^k(\g^*),\ \Omega'
\in \Wedge(\g^*).\]

In Section 1, by using this, we detail a result of Medina and Revoy in \cite{MR85} that there is an isomorphism between the second cohomology group $H^2(\g,\CC)$ and $\Der_a(\g)/\ad(\g)$ where $\Der_a(\g)$ is the vector space of skew-symmetric derivations of $\g$ and $\ad(\g)$ is its subspace of inner ones. 

Involving in the well-known theorem by Jacobson on the invertibility of Lie algebra derivations that a Lie algebra over a field of characteristic zero is nilpotent
if it admits an invertible derivation, we are interested in Lie algebras having an invertible derivation. We prove that the Lie algebra $\ad(\g)$ of a symplectic quadratic Lie algebra has that property. In particular, we have the following (Proposition \ref{prop1.5}).

\bigskip
{\sc{Theorem 1.}} {\em Let $(\g,B,\omega)$ be a symplectic quadratic Lie algebra. Consider the mapping $\mathcal{D}:\ad(\g)\rightarrow \ad(\g)$ defined by $\mathcal{D}(\ad(X))=\ad\left(\phi^{-1}\left(\iota_X(\omega)\right)\right)$ with $\phi:\g\rightarrow \g^*$, $\phi(X)=B(X,.)$, then $\mathcal{D}$ is an invertible derivation of $\ad(\g)$.}
\bigskip

The reader is referred to \cite{BBM07} for futher information about symplectic quadratic Lie algebras. A family of such algebras is given to illustrate this situation. 

In Section 2, motivated by Corollary 4.4 in \cite{MR85}, we give the Betti numbers for a family of solvable quadratic Lie algebras defined as follows. For each $n\in\NN$, let $\g_{2n+2}$ denote the Lie algebra with basis $\{X_0,...,X_n,Y_0,...,Y_n\}$ and non-zero Lie brackets $[Y_0,X_i]=X_i$, $[Y_0,Y_i]=-Y_i$, $[X_i,Y_i]=X_0$, $1\leq i\leq n$. Denote by $B^k(\g_{2n+2}) = B^k(\g_{2n+2},\CC)$, $Z^k(\g_{2n+2}) = Z^k(\g_{2n+2},\CC)$, $H^k(\g_{2n+2}) = H^k(\g_{2n+2},\CC)$ and $b_{k}= b_{k}(\g_{2n+2},\CC)$. By computing on super Poisson brackets, our second result is the following.

\bigskip
{\sc{Theorem 2.}} {\em The $k^{th}$ Betti numbers of $\g_{2n+2}$ are given as follows:
	
	\begin{enumerate}
		\item If $k$ is even then one has \[b_{k}= \left|\begin{pmatrix} n \\ \frac{k}{2}\end{pmatrix}\begin{pmatrix} n \\ \frac{k}{2}\end{pmatrix} 
			-\begin{pmatrix} n \\ \frac{k-2}{2}\end{pmatrix}\begin{pmatrix} n \\ \frac{k-2}{2}\end{pmatrix}\right|.\]
			\item If $k$ is odd then one has
				
		\begin{itemize}
			\item if $k<n+1$ then \[b_{k}= \begin{pmatrix} n \\ \frac{k-1}{2}\end{pmatrix}\begin{pmatrix} n \\ \frac{k-1}{2}\end{pmatrix} -\begin{pmatrix} n \\ \frac{k-3}{2}\end{pmatrix}\begin{pmatrix} n \\ \frac{k-3}{2}\end{pmatrix},\]
			\item if $k=n+1$ then \[b_{n+1}= 2\begin{pmatrix} n \\ \frac{n}{2}\end{pmatrix}\begin{pmatrix} n \\ \frac{n}{2}\end{pmatrix} -2\begin{pmatrix} n \\ \frac{n+2}{2}\end{pmatrix}\begin{pmatrix} n \\ \frac{n+2}{2}\end{pmatrix},\]
			\item if $k>n+1$ then 
			\[b_{k}= \begin{pmatrix} n \\ \frac{k-1}{2}\end{pmatrix}\begin{pmatrix} n \\ \frac{k-1}{2}\end{pmatrix} - \begin{pmatrix} n \\ \frac{k+1}{2}\end{pmatrix}\begin{pmatrix} n \\ \frac{k+1}{2}\end{pmatrix}.\]
			\end{itemize}
				
 \end{enumerate}} \bigskip

Our method is direct and different from the Pouseele's method given in \cite{Pou05} that we shall recall in Appendix 1. In the Pouseele's method, the Betti numbers of $\g_{2n+2}$ follow the Betti numbers of the $2n+1$-dimensional Lie algebra $\fk$ defined by $[x,x_i]=x_i$ and $[y,y_i]=-y_i$ for all $1\leq i\leq n$.

Other results of Betti numbers for some families of nilpotent Lie algebras, we refer the reader to \cite{ACJ97}, \cite{Pou05} or \cite{San83}.

\section{A characterization of symplectic quadratic Lie algebras}

Let $\g$ be a complex Lie algebra endowed with a non-degenerate invariant symmetric bilinear form $B$. In this case, we call the pair $(\g,B)$ a {\em quadratic} Lie algebra. Denote by $\Der_a(\g)$ the vector space of skew-symmetric derivations of $\g$, that is the vector space of derivations $D$ satisfying $B(D(X),Y) = -B(X,D(Y))$ for all $X,\ Y\in\g$, then $\Der_a(\g)$ is a Lie subalgebra of $\Der(\g)$.

\begin{prop}
There exists a Lie algebra isomorphism $T$ between $\Der_a(\g)$ and the space $\{\Omega\in\Wedge{}^2(\g^*)\ |\ \{I,\Omega\}=0\}$. This isomorphism induces an isomorphism from $\ad(\g)$ onto $\iota_\g(I)=\{\iota_X(I)\in\Wedge{}^2(\g^*)\ |\ X\in\g\}$.
\end{prop}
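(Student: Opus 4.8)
The plan is to construct the map $T$ explicitly and verify it is a Lie algebra isomorphism. Given a skew-symmetric derivation $D \in \Der_a(\g)$, I would associate to it the $2$-form $\Omega_D \in \Wedge^2(\g^*)$ defined by $\Omega_D(X,Y) = B(D(X),Y)$. The skew-symmetry of $D$ with respect to $B$ guarantees that $\Omega_D$ is genuinely antisymmetric: indeed $\Omega_D(Y,X) = B(D(Y),X) = -B(Y,D(X)) = -\Omega_D(X,Y)$. Since $B$ is non-degenerate, the assignment $D \mapsto \Omega_D$ is linear and injective, and a dimension count (or the explicit inverse $\Omega \mapsto \phi^{-1}\Omega$, where $\phi(X)=B(X,\cdot)$ and we read $\Omega$ as a map $\g \to \g^*$) shows it is a vector-space isomorphism onto all of $\Wedge^2(\g^*)$.

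The central step is to characterize which $2$-forms $\Omega$ arise from derivations, and to show this is exactly the condition $\{I,\Omega\}=0$. The idea is that $D$ being a derivation, i.e. $D([X,Y]) = [D(X),Y]+[X,D(Y)]$, translates under the correspondence $D \leftrightarrow \Omega_D$ into a closedness-type condition. Using $I(X,Y,Z)=B([X,Y],Z)$ and the invariance of $B$, I would compute the super Poisson bracket $\{I,\Omega_D\}$ directly from the defining formula and show that each term $B(Y_i,Y_j)\,\io_{X_i}(I)\wedge \io_{X_j}(\Omega_D)$ reassembles into the derivation obstruction $B\big(D([X,Y]) - [D(X),Y] - [X,D(Y)],\,Z\big)$ evaluated on a triple. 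Thus $\{I,\Omega_D\}=0$ if and only if $D$ is a derivation, which pins down the image of $T$ as precisely $\{\Omega \in \Wedge^2(\g^*) \mid \{I,\Omega\}=0\}$.

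To finish, I would check that $T$ respects the Lie brackets, i.e. that $\Omega_{[D_1,D_2]} = \{\text{corresponding bracket on forms}\}$. Here the natural bracket on the target is inherited from the super Poisson bracket, and the verification amounts to confirming $B([D_1,D_2](X),Y)$ matches the bracket of $\Omega_{D_1}$ and $\Omega_{D_2}$; this is a routine but careful computation using that both $D_1,D_2$ are skew-symmetric derivations. For the second assertion, I would specialize to inner derivations $\ad(X)$: a direct computation gives $\Omega_{\ad(X)}(Y,Z) = B([X,Y],Z) = I(X,Y,Z) = \iota_X(I)(Y,Z)$, so $T(\ad(X)) = \iota_X(I)$. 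Hence $T$ restricts to an isomorphism from $\ad(\g)$ onto $\iota_\g(I)$.

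I expect the main obstacle to be the second step: the explicit expansion of the super Poisson bracket $\{I,\Omega_D\}$ and the bookkeeping needed to show it equals the derivation obstruction. The sign conventions $(-1)^{k+1}$ in the bracket, combined with the contractions $\io_{X_i}$ and the invariance identity $B([X,Y],Z)=B(X,[Y,Z])$ used to move brackets across $B$, make this the delicate point; the injectivity, surjectivity, and bracket-compatibility are comparatively mechanical once this identity is in hand.
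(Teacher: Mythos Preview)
Your proposal is correct and follows essentially the same route as the paper: define $T(D)=\Omega_D$ with $\Omega_D(X,Y)=B(D(X),Y)$, identify the derivation condition with $\{I,\Omega\}=0$, verify $T([D,D'])=\{T(D),T(D')\}$ by direct computation, and specialize to $\ad(X)$ to obtain $\iota_X(I)$. The one simplification the paper exploits that you might adopt is in your ``central step'': rather than expanding $\{I,\Omega_D\}$ from the super Poisson formula, the paper first rewrites the derivation condition (using invariance of $B$) as the cocycle identity $\Omega([X,Y],Z)+\Omega([Y,Z],X)+\Omega([Z,X],Y)=0$ and then invokes the already-established fact $\partial=-\{I,\cdot\}$ from \cite{PU07}, which avoids the sign bookkeeping you flag as the main obstacle.
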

\begin{proof}
Let $D\in \Der_a(\g)$ and set $\Omega\in\Wedge{}^2(\g^*)$ by $\Omega(X,Y)=B(D(X),Y)$ for all $X,\ Y\in\g$. Then $D$ is a derivation of $\g$ if and only if
\[
\Omega([X,Y],Z)+\Omega([Y,Z],X)+\Omega([Z,X],Y)=0
\]
for all $X,\ Y,\ Z\in\g$. It means $\{I,\Omega\}=0$. Define the map $T$ from $\Der_a(\g)$ onto  $\{\Omega\in\Wedge{}^2(\g^*)\ |\ \{I,\Omega\}=0\}$ by $T(D)=\Omega$ then $T$ is a one-to-one correspondence.

Now we shall show that $T([D,D'])=\{T(D),T(D')\}$ for all $D,\ D'\in \Der_a(\g)$. Indeed, set $\Omega=T(D)$, $\Omega'=T(D')$ and fix an orthonormal basis $\{X_j\}_{j=1}^n$ of $\g$. One has
\[
\{\Omega,\Omega'\}(X,Y)=-\left(\sum_{j=1}^n \io_{X_j}(\Omega)
\wedge \io_{X_j}(\Omega')\right) (X,Y)
\]
\[
=-\sum_{j=1}^n \left(\Omega(X_j,X)\Omega'(X_j,Y)-\Omega(X_j,Y)
\Omega'(X_j,X)\right)
\]
\[
=-\sum_{j=1}^n B\left(B(D(X_j),X)D'(X_j)-B(D'(X_j),X)D(X_j),Y)\right)
\]
\[
=-\sum_{j=1}^n B\left(D'(D(X))-D(D'(X)), Y)\right)
= -B([D',D](X), Y).
\]
That means $T([D,D'])=\{T(D),T(D')\}$ and then $T$ is a Lie algebra isomorphism.

If $D=\ad(X_0)$ then $T(D)(X,Y)=B([X_0,X],Y)=I(X_0,Y,Z)=\iota_{X_0}(I)(X,Y)$. Therefore, $T(D)=\iota_{X_0}(I)$.
\end{proof}
\begin{cor} $\{\iota_X(I),\iota_Y(I)\} =\iota_{[X,Y]}(I)$.
\end{cor}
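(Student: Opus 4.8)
The plan is to deduce this identity directly from the Proposition just proved, since the corollary is essentially that isomorphism read off on inner derivations. First I would observe that for every $X\in\g$ the inner derivation $\ad(X)$ is skew-symmetric with respect to $B$: by invariance of $B$ one has $B([X,Y],Z)=-B(Y,[X,Z])$, that is $B(\ad(X)(Y),Z)=-B(Y,\ad(X)(Z))$, so $\ad(X)\in\Der_a(\g)$. Hence $\iota_X(I)$ and $\iota_Y(I)$ lie in the image of $T$; indeed the last line of the Proposition gives precisely $T(\ad(X))=\iota_X(I)$ and $T(\ad(Y))=\iota_Y(I)$, and the restriction of $T$ to $\ad(\g)$ is an isomorphism onto $\iota_\g(I)$.

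Next I would invoke the fact, established in the Proposition, that $T$ is a Lie algebra isomorphism, so that $\{T(D),T(D')\}=T([D,D'])$ for all $D,D'\in\Der_a(\g)$. Taking $D=\ad(X)$ and $D'=\ad(Y)$ yields $\{\iota_X(I),\iota_Y(I)\}=T([\ad(X),\ad(Y)])$. The classical identity $[\ad(X),\ad(Y)]=\ad([X,Y])$, which is simply a restatement of the Jacobi identity in $\g$, then gives $\{\iota_X(I),\iota_Y(I)\}=T(\ad([X,Y]))=\iota_{[X,Y]}(I)$, which is the claim.

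The argument is short because all of the analytic work was done in the Proposition; the only points requiring care are bookkeeping ones. The step I would check most carefully is sign consistency: that the computation in the Proposition genuinely produces $\{T(D),T(D')\}=T([D,D'])$ rather than $T([D',D])$, and that forming the commutator of the two inner derivations yields $\ad([X,Y])$ with no stray sign. Once these are confirmed, no further manipulation of the super Poisson bracket is needed. As a sanity check one could also verify the identity by a direct expansion of $\{\iota_X(I),\iota_Y(I)\}$ in an orthonormal basis, but the route through $T$ is cleaner and reuses the isomorphism already in hand.
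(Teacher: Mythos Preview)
Your argument is correct and is exactly the derivation the paper intends: the corollary is stated without proof because it follows immediately from the Proposition by applying $T$ to $\ad(X)$ and $\ad(Y)$ and using $[\ad(X),\ad(Y)]=\ad([X,Y])$. The sign check you flag is fine, since the Proposition's computation ends with $\{\Omega,\Omega'\}(X,Y)=-B([D',D](X),Y)=B([D,D'](X),Y)=T([D,D'])(X,Y)$.
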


\begin{cor} \cite{MR85}\hfill

The cohomology group $H^2(\g,\CC)\simeq \Der_a(\g,B)/\ad(\g)$.
\end{cor}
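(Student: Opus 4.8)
The plan is to realize both the cocycle and coboundary spaces inside $\Wedge^2(\g^*)$ via the Pinczon--Ushirobira description $\partial=-\{I,\cdot\}$ of the Chevalley--Eilenberg differential, and then transport the quotient across the isomorphism $T$ of the preceding Proposition. Recall that $H^2(\g,\CC)=Z^2/B^2$, where $Z^2=\ker\bigl(\partial\colon\Wedge{}^2(\g^*)\to\Wedge{}^3(\g^*)\bigr)$ is the space of $2$-cocycles and $B^2=\im\bigl(\partial\colon\g^*\to\Wedge{}^2(\g^*)\bigr)$ is the space of $2$-coboundaries.

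First I would identify the cocycles. Since $\partial\Omega=-\{I,\Omega\}$ for every $\Omega\in\Wedge{}^2(\g^*)$, one has $\partial\Omega=0$ if and only if $\{I,\Omega\}=0$, so
\[
Z^2=\{\Omega\in\Wedge{}^2(\g^*)\ |\ \{I,\Omega\}=0\}.
\]
This is exactly the target of the isomorphism $T$ built in the Proposition, whence $T\colon\Der_a(\g)\xrightarrow{\ \sim\ }Z^2$ is a linear isomorphism.

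Next I would identify the coboundaries with $\iota_\g(I)$. By non-degeneracy of $B$, every $\xi\in\g^*$ can be written uniquely as $\xi=\phi(X_0)$ with $\phi(X)=B(X,\cdot)$ and $X_0\in\g$. Using that $\partial$ is the Chevalley--Eilenberg differential together with the invariance of $B$, a short computation gives, for all $X,Y\in\g$,
\[
\partial(\phi(X_0))(X,Y)=-\phi(X_0)([X,Y])=-B(X_0,[X,Y])=-B([X_0,X],Y)=-\iota_{X_0}(I)(X,Y),
\]
so that $\partial(\phi(X_0))=-\iota_{X_0}(I)$ and hence $B^2=\partial(\g^*)=\iota_\g(I)$. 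By the last assertion of the Proposition, $T$ restricts to an isomorphism $\ad(\g)\xrightarrow{\ \sim\ }\iota_\g(I)=B^2$.

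Finally, since $T$ is a linear isomorphism carrying $\Der_a(\g)$ onto $Z^2$ and the subspace $\ad(\g)$ onto $B^2$, it descends to an isomorphism of the quotients, yielding
\[
\Der_a(\g)/\ad(\g)\ \cong\ Z^2/B^2=H^2(\g,\CC).
\]
The only genuinely computational step is the coboundary identification $\partial(\phi(X_0))=-\iota_{X_0}(I)$; everything else is a formal consequence of the Proposition. The main (modest) obstacle I anticipate is purely bookkeeping: making sure the sign conventions in the Pinczon--Ushirobira differential and in the definition of $T$ are aligned so that $T$ simultaneously matches $Z^2$ with $\Der_a(\g)$ and $B^2$ with $\ad(\g)$, which is what makes the passage to quotients legitimate.
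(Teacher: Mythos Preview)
Your proposal is correct and follows exactly the route the paper intends: the corollary is stated without proof precisely because it drops out of the preceding Proposition once one notes that $Z^2=\{\Omega\in\Wedge^2(\g^*)\mid\{I,\Omega\}=0\}$ (since $\partial=-\{I,\cdot\}$) and that $B^2=\partial(\g^*)=\iota_\g(I)$, the latter being the computation you carried out. Your write-up simply makes explicit the two identifications the paper leaves implicit.
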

\begin{defn}
A non-degenerate skew-symmetric bilinear form $\omega:\g\times\g\rightarrow\CC$ is called a {\em symplectic structure} on $\g$ if it satisfies
\[
\omega([X,Y],Z)+\omega([Y,Z],X)+\omega([Z,X],Y)=0
\]
for all $X,\ Y,\ Z\in\g$.
\end{defn}

A symplectic structure $\omega$ on a quadratic Lie algebra $(\g,B)$ is corresponding to a skew-symmetric invertible derivation $D$ defined by $\omega(X,Y)=B(D(X),Y)$, for all $X,\ Y\in\g$. As above, a symplectic structure is exactly a non-degenerate 2-form $\omega$ satisfying $\{I,\omega\}=0$. If $\g$ has a such $\omega$  then we call $(\g,B,\omega)$ a {\em symplectic} quadratic Lie algebra.

For symplectic quadratic Lie algebras, the reader can refer to \cite{BBM07} for more details. Here we give a following property.
\begin{prop}\label{prop1.5}
 Let $(\g,B,\omega)$ be a symplectic quadratic Lie algebra. Consider the mapping $\mathcal{D}:\ad(\g)\rightarrow \ad(\g)$ defined by $\mathcal{D}(\ad(X))=\ad\left(\phi^{-1}\left(\iota_X(\omega)\right)\right)$ with $\phi:\g\rightarrow \g^*$, $\phi(X)=B(X,.)$, then $\mathcal{D}$ is an invertible derivation of $\ad(\g)$.
\end{prop}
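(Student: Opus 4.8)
The plan is first to recognize what the element $\phi^{-1}(\iota_X(\omega))$ actually is. By definition it is the unique $Z\in\g$ with $B(Z,Y)=\iota_X(\omega)(Y)=\omega(X,Y)$ for every $Y\in\g$. Since the symplectic structure $\omega$ is, as noted above, attached to a skew-symmetric invertible derivation $D$ of $\g$ through the relation $\omega(X,Y)=B(D(X),Y)$, this unique element is exactly $D(X)$. Thus $\phi^{-1}(\iota_X(\omega))=D(X)$ and the map in question is simply $\mathcal{D}(\ad(X))=\ad(D(X))$. The whole proposition then reduces to transporting properties of the invertible derivation $D$ through the surjection $\ad\colon\g\to\ad(\g)$, whose kernel is the center $\zk(\g)$.

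Next I would establish that $\mathcal{D}$ is well defined and linear, which is the one point needing care since $\ad$ is not injective. One must check that $\ad(X)=\ad(X')$ forces $\ad(D(X))=\ad(D(X'))$, that is, that $D$ carries $\zk(\g)=\ker(\ad)$ into itself. This is the general fact that a derivation preserves the center: for $Z\in\zk(\g)$ and any $Y\in\g$,
\[
[D(Z),Y]=D([Z,Y])-[Z,D(Y)]=0,
\]
so $D(Z)\in\zk(\g)$. Well-definedness follows, and linearity is then immediate from the linearity of $D$ and of $\ad$.

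For the derivation property I would compute directly, using the bracket $[\ad(X),\ad(Y)]=\ad([X,Y])$ on $\ad(\g)$ together with the Leibniz identity for $D$. Concretely,
\[
\mathcal{D}([\ad(X),\ad(Y)])=\ad(D([X,Y]))=\ad([D(X),Y])+\ad([X,D(Y)]),
\]
and the right-hand side is precisely $[\mathcal{D}(\ad(X)),\ad(Y)]+[\ad(X),\mathcal{D}(\ad(Y))]$. Hence $\mathcal{D}$ is a derivation of $\ad(\g)$; this step is purely formal.

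Finally, for invertibility I would pass to the identification $\ad(\g)\cong\g/\zk(\g)$, under which $\mathcal{D}$ becomes the endomorphism induced by $D$. Because $\g$ is finite dimensional, $D$ is invertible, and $D$ stabilizes $\zk(\g)$, the restriction $D|_{\zk(\g)}$ is an injective endomorphism of the finite-dimensional space $\zk(\g)$, hence $D(\zk(\g))=\zk(\g)$. Consequently $D(X)\in\zk(\g)$ implies $X\in\zk(\g)$, so the induced map on $\g/\zk(\g)$ is injective and therefore bijective, giving invertibility of $\mathcal{D}$. I expect the only real obstacle to lie in the bookkeeping around the center in this first and last step: one must be sure that $D$ stabilizes $\zk(\g)$ so that $\mathcal{D}$ both descends and remains invertible, the derivation identity itself being automatic.
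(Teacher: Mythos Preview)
Your proof is correct. Both you and the paper reach the same key identity $\mathcal{D}(\ad X)=\ad(D(X))$, where $D$ is the invertible skew-symmetric derivation attached to $\omega$, but you arrive there and exploit it differently. The paper uses its super Poisson bracket formalism: from $\{I,\omega\}=0$ it obtains $\{\iota_X(I),\omega\}=\{I,\iota_X(\omega)\}$, and then recognizes this as the commutator relation $[\ad(X),D]=\ad(Y)$ in $\Der(\g)$ (up to a sign), so that $\mathcal{D}$ is literally the restriction to the ideal $\ad(\g)$ of the inner derivation $[\,\cdot\,,D]$ of $\Der(\g)$; well-definedness and the Leibniz rule are then automatic, and invertibility is obtained by a surjectivity argument using the non-degeneracy of $\omega$. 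Your route is more elementary: you identify $\phi^{-1}(\iota_X\omega)=D(X)$ directly, prove well-definedness by the general fact that derivations preserve the center, check the derivation identity by Leibniz, and deduce invertibility by passing to $\g/\zk(\g)$. Your argument avoids the Poisson bracket machinery entirely and makes the role of the center explicit, at the cost of a few more lines; the paper's argument is shorter once that machinery is in place and gives the conceptual picture of $\mathcal{D}$ as an inner derivation of $\Der(\g)$.
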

\begin{proof}

As above we have $\{I,\omega\}=0$ and then $\iota_X\left(\{I,\omega\}\right)=0$ for all $X\in\g$. It implies $\{\iota_X(I),\omega\}=\{I,\iota_X(\omega)\}$ for all $X\in\g$. Note that if $X$ is nonzero, since $\omega$ is non-degenerate then $\iota_X(\omega)$ is non trivial. Set $Y=\phi^{-1}\left(\iota_X(\omega)\right)$ then $\{I,\iota_X(\omega)\} = \iota_Y(I)$ and therefore this defines an inner derivation. Let $D$ be the derivation corresponding to $\omega$ then one has $[\ad(X),D]=\ad(Y)$.

Let $\ad(X)\in \ad(\g)$. Set $\alpha=\phi(X)$. Since $\omega$ is non-degenerate then there exists an element $Y\in\g$ such that $\alpha=\iota_Y(\omega)$. In this case,  $\mathcal{D}(\ad(Y))=\ad(X)$. That means $\mathcal{D}$ onto and therefore it is bijective.
\end{proof}

Next, we give a family of symplectic quadratic Lie algebras that has been defined in \cite{DPU12} as follows. 

\begin{ex}  Let $p \in \NN \setminus \{ 0 \}$. We denote the {\em Jordan block
    of size} $p$ by $J_1 := (0)$ and for $p \geq 2$, \[J_p :
  = \begin{pmatrix} 0 & 1 & 0 & \dots & 0 \\ 0 & 0 & 1 & \dots & 0\\
    \vdots & \vdots & \dots & \ddots & \vdots \\ 0 & 0 & \dots & 0 & 1
    \\ 0 & 0 & 0 & \dots & 0 \end{pmatrix}.\]

For $p \geq 2$, we consider $\qk = \CC^{2p}$ with a basis $\{X_i,\ Y_i\}$, $1\leq i\leq p$, and equipped with a bilinear form $B$ satisfying $B(X_i,X_j)=B(Y_i,Y_j)=0$ and $B(X_i,Y_j)=\delta_{ij}$. Let $C:\qk\rightarrow\qk$ with
  matrix \[ C=\begin{pmatrix} J_p & 0 \\ 0 & -{}^t J_p\end{pmatrix}\] in
  the given basis. Then $C \in \ok(2p)$.
	
	Let $\hk=\CC^2$ and $\{X_0,Y_0\}$ be a basis of $\hk$. Define on the vector space
  $\jk_{2p}=\qk\oplus\hk$ the Lie bracket $[Y_0,X] = C(X)$, $[X,Y] = B(C(X),Y)X_0$ and the bilinear form $\overline{B}(X_0,Y_0)=1$, $\overline{B}(X_0,X_0)=\overline{B}(Y_0,Y_0)=\overline{B}(X_0,X) = \overline{B}(Y_0,X)=0$ and $\overline{B}(X,Y)=B(X,Y)$ for all $X,\ Y\in\qk$. So
  $\jk_{2p}$ is a nilpotent Lie algebra and it will be called a $2p+2$-dimensional {\em nilpotent
  Jordan-type} Lie algebra.
\end{ex}

Denote by $\{\alpha,\ \alpha_1,...,\ \alpha_p,\ \beta,\ \beta_1,...,\ \beta_p\}$ the dual basis of $\{X_0,...,X_p,Y_0,...,Y_p\}$ then $I=\beta\wedge\sum_{i=1}^{p-1}\alpha_{i+1}\wedge\beta_i$. In this case, we choose $\omega = \alpha\wedge \beta+ \sum_{i=1}^{p}i\alpha_{i}\wedge\beta_i$ then $\{I,\omega\} = 0$ and therefore $(\jk_{2p},B,\omega)$ is a symplectic quadratic Lie algebra. Notice that if we define $\mathcal{D}(\ad(Y_0))=-\ad(Y_0)$, $\mathcal{D}(\ad(X_i))=i\ad(X_i)$ and $\mathcal{D}(\ad(Y_i))=-i\ad(Y_i)$ then $\mathcal{D}$ is an invertible derivation of $\ad(\jk_{2p})$.

\section{The Betti numbers for a family of solvable quadratic Lie algebras}

For each $n\in\NN$, let $\g_{2n+2}$ denote the Lie algebra with basis $\{X_0,...,X_n,Y_0,...,Y_n\}$ and non-zero Lie brackets $[Y_0,X_i]=X_i$, $[Y_0,Y_i]=-Y_i$, $[X_i,Y_i]=X_0$, $1\leq i\leq n$. Then $\g$ is quadratic with invariant bilinear form $B$ given by $B(X_i,Y_i)=1$, $0\leq i\leq n$, zero otherwise.

Let $\{\alpha,\ \alpha_1,...,\ \alpha_n,\ \beta,\ \beta_1,...,\ \beta_n\}$ be the dual basis of $\{X_0,...,X_n,Y_0,...,Y_n\}$ and set $V=\spa\{\alpha_i\}$, $W=\spa\{\beta_i\}$, $1\leq i\leq n$. It is easy to check that the associated 3-form of $\g_{2n+2}$:
\[
I=\beta\wedge\sum_{i=1}^n\alpha_i\wedge\beta_i
.\]

Denote by $\Omega_n:=\sum_{i=1}^n\alpha_i\wedge\beta_i$ then one has \[B^2(\g_{2n+2}) =\{\iota_X(I)|\ X\in\g_{2n+2}\}=\spa\left\{\beta\wedge\alpha_i,\beta\wedge\beta_i, \Omega_n\ | \ 1\leq i\leq n\right\}.\]

If $n=1$ then by we can directly calculate that $H^2(\g_4)=\{0\}$. If $n>1$, we have the non-zero super Poisson brackets:
\begin{enumerate}
	\item[(i)] $\{ I,\alpha\wedge\alpha_i\}=\alpha_i\wedge\Omega_n-\alpha\wedge\beta\wedge\alpha_i$  and $\{I,\alpha\wedge\beta_i\}=\beta_i\wedge\Omega_n+\alpha\wedge\beta\wedge\beta_i$,
\item[(ii)] $\{I,\alpha\wedge\beta\}=I$,
\item[(iii)] $\{ I,\alpha_i\wedge\alpha_j\}=2\beta\wedge\alpha_i\wedge\alpha_j$ and $\{ I,\beta_i\wedge\beta_j\}=-2\beta\wedge\beta_i\wedge\beta_j$.
\end{enumerate}

It results that $Z^2(\g_{2n+2})=\spa\left\{ \beta\wedge\alpha_i,\beta\wedge\beta_i, \alpha_i\wedge\beta_j\ |\ 1\leq i,j\leq n\right\}$ and then the second cohomology group $H^2(\g_{2n+2})= \spa\left\{ [\alpha_i\wedge\beta_j]\right\}/\spa\left\{ \left[\sum_{i=1}^n\alpha_i\wedge\beta_i\right]\right\}$, where $1\leq i,j\leq n$. So we recover the result of Medina and Revoy in \cite{MR85} obtained by describing the space $\Der_a(\g_{2n+2})$ that $b_2=n^2-1$.

To get the Betti numbers $b_k$ for $k\geq 3$, we need the following lemma.
\begin{lem}
The map $\{\Omega_n,.\}: \Wedge^k(V)\otimes \Wedge^m(W)\rightarrow \Wedge^k(V)\otimes \Wedge^m(W)$ with $k,m\geq 0$ is a vector space isomorphism if $k\neq m$ and $\{\Omega_n,\Wedge^k(V)\otimes \Wedge^k(W)\} =\{0\}$.
\end{lem}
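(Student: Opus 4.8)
The plan is to exploit the fact that, since $\Omega_n$ is a $2$-form, the super Poisson bracket $\{\Omega_n,\cdot\}$ is an even derivation of the exterior algebra $(\Wedge(\g_{2n+2}^*),\wedge)$ that preserves the $\ZZ$-grading (it shifts degree by $\deg\Omega_n-2=0$), exactly as $\partial=-\{I,\cdot\}$ is an odd derivation of degree $+1$. Consequently $\{\Omega_n,\cdot\}$ is completely determined by its values on the generating $1$-forms, and on a wedge product it acts additively. So the first step I would take is to reduce the whole statement to computing $\{\Omega_n,\xi\}$ for $\xi$ running through the dual basis $\{\alpha,\alpha_1,\dots,\alpha_n,\beta,\beta_1,\dots,\beta_n\}$.

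The computational heart — and the step I expect to be the main obstacle — is to evaluate these brackets explicitly from the defining formula. Specializing the super Poisson bracket to the present metric $B$, for which the vector metrically dual to $\alpha_i$ is $Y_i$ and the one dual to $\beta_i$ is $X_i$, one first records the contractions $\iota_{X_i}(\Omega_n)=\beta_i$ and $\iota_{Y_i}(\Omega_n)=-\alpha_i$ for $1\le i\le n$, all other contractions (in particular those by $X_0$ and $Y_0$) vanishing. Feeding these into the formula I would obtain $\{\Omega_n,\alpha_j\}=\alpha_j$ and $\{\Omega_n,\beta_j\}=-\beta_j$ for $1\le j\le n$, together with $\{\Omega_n,\alpha\}=\{\Omega_n,\beta\}=0$. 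In other words $\{\Omega_n,\cdot\}$ acts on $V$ as $+\Id$ and on $W$ as $-\Id$; in particular it maps $V$ into $V$ and $W$ into $W$, so it genuinely preserves each subspace $\Wedge^k(V)\otimes\Wedge^m(W)$.

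Finally I would extend by the Leibniz rule. On a monomial $\alpha_{i_1}\wedge\dots\wedge\alpha_{i_k}\wedge\beta_{j_1}\wedge\dots\wedge\beta_{j_m}$ of bidegree $(k,m)$, being an even derivation, $\{\Omega_n,\cdot\}$ acts by the sum of the eigenvalues of its factors, namely $k\cdot(+1)+m\cdot(-1)=k-m$. Hence on all of $\Wedge^k(V)\otimes\Wedge^m(W)$ the map $\{\Omega_n,\cdot\}$ is the scalar operator $(k-m)\,\Id$. If $k\neq m$ this scalar is nonzero, so the map is a vector space isomorphism; if $k=m$ the scalar is $0$, so the map vanishes identically. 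This is precisely the assertion of the lemma, and it is this diagonal structure that will later let one split the cochain complex into the bidegree pieces needed to extract the Betti numbers.
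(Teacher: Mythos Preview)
Your proposal is correct and follows essentially the same route as the paper: the paper's proof simply records the identities $\{\Omega_n,\alpha_{i_1}\wedge\dots\wedge\alpha_{i_k}\wedge\beta_{j_1}\wedge\dots\wedge\beta_{j_m}\}=(k-m)\,\alpha_{i_1}\wedge\dots\wedge\alpha_{i_k}\wedge\beta_{j_1}\wedge\dots\wedge\beta_{j_m}$ and concludes. Your argument is a more fully articulated version of the same computation, making explicit the derivation property and the generator values $\{\Omega_n,\alpha_j\}=\alpha_j$, $\{\Omega_n,\beta_j\}=-\beta_j$ that underlie those identities.
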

\begin{proof} We have 
	 $\{\Omega_n,\alpha_{i_1}\wedge...\wedge\alpha_{i_k}\}=k\alpha_{i_1}\wedge...\wedge\alpha_{i_k}$, $\{ \Omega_n,\beta_{i_1}\wedge...\wedge\beta_{i_m}\}=-m\wedge\beta_{i_1}\wedge...\wedge\beta_{i_m}$ and $\{ \Omega_n,\alpha_{i_1}\wedge...\wedge\alpha_{i_k}\wedge\beta_{j_1}\wedge...\wedge\beta_{j_m}\}=(k-m)\alpha_{i_1}\wedge...\wedge\alpha_{i_k}\wedge\beta_{j_1}\wedge...\wedge\beta_{j_m}$ then the result follows.
\end{proof}

By a straightforward computation on super Poisson brackets we have the following corollary.
\begin{cor}
The restrictions of the differential $\partial$ from $\alpha\wedge\Wedge^i(V)\otimes \Wedge^j(W)$ onto $\Omega_n\wedge\Wedge^i(V)\otimes \Wedge^j(W)\oplus \alpha\wedge \beta\wedge\Wedge^i(V)\otimes \Wedge^j(W)$ and from $\Wedge^i(V)\otimes \Wedge^j(W)$ onto $\beta\wedge \Wedge^i(V)\otimes \Wedge^j(W)$ with $i,j\geq 0$, $i\neq j$	are vector space isomorphisms.
\end{cor}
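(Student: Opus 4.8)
The plan is to reduce $\partial=-\{I,\cdot\}$ to the operator $\{\Omega_n,\cdot\}$ already understood in the Lemma, by first proving one master identity. Since $I=\beta\wedge\Omega_n$, I would compute the contractions of $I$ along the basis vectors: $\iota_{X_0}(I)=0$, $\iota_{Y_0}(I)=\Omega_n$, and $\iota_{X_i}(I)=-\beta\wedge\beta_i$, $\iota_{Y_i}(I)=\beta\wedge\alpha_i$ for $1\le i\le n$. Feeding these into the super Poisson bracket formula of the Introduction, which for the $3$-form $I$ reads $\{I,\Omega\}=\sum_{p=0}^{n}\big(\iota_{X_p}(I)\wedge\iota_{Y_p}(\Omega)+\iota_{Y_p}(I)\wedge\iota_{X_p}(\Omega)\big)$, the index $p=0$ contributes $\Omega_n\wedge\iota_{X_0}(\Omega)$, while the terms with $p\ge1$ regroup, after pulling $\beta$ to the front, into $\beta\wedge\{\Omega_n,\Omega\}$. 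This gives the identity
\[
\partial\Omega=-\,\Omega_n\wedge\iota_{X_0}(\Omega)-\beta\wedge\{\Omega_n,\Omega\},
\]
valid for all $\Omega\in\Wedge(\g_{2n+2}^{*})$, which is the computational core of the statement.

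With the identity in hand, both restrictions follow from the Lemma. For $\omega\in\Wedge^i(V)\otimes\Wedge^j(W)$ one has $\iota_{X_0}(\omega)=0$ and $\{\Omega_n,\omega\}=(i-j)\omega$, so $\partial\omega=-(i-j)\,\beta\wedge\omega$; as $i\neq j$ the scalar $i-j$ is non-zero and $\omega\mapsto\beta\wedge\omega$ is a linear bijection of $\Wedge^i(V)\otimes\Wedge^j(W)$ onto $\beta\wedge\Wedge^i(V)\otimes\Wedge^j(W)$, which is the second claimed isomorphism. For the first restriction I put $\Omega=\alpha\wedge\omega$; here $\iota_{X_0}(\alpha\wedge\omega)=\omega$, and because $\alpha$ is annihilated by every $\iota_{X_p},\iota_{Y_p}$ with $p\ge1$ it passes through the bracket to give $\{\Omega_n,\alpha\wedge\omega\}=(i-j)\,\alpha\wedge\omega$. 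The identity then yields
\[
\partial(\alpha\wedge\omega)=-\,\Omega_n\wedge\omega+(i-j)\,\alpha\wedge\beta\wedge\omega,
\]
whose two summands lie respectively in $\Omega_n\wedge\Wedge^i(V)\otimes\Wedge^j(W)$ and in $\alpha\wedge\beta\wedge\Wedge^i(V)\otimes\Wedge^j(W)$.

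Finally I would read injectivity off the second summand: since $i\neq j$, the map $\omega\mapsto(i-j)\,\alpha\wedge\beta\wedge\omega$ is a bijection onto $\alpha\wedge\beta\wedge\Wedge^i(V)\otimes\Wedge^j(W)$, so composing $\partial$ with the projection to that summand is already injective; hence $\partial$ is injective on $\alpha\wedge\Wedge^i(V)\otimes\Wedge^j(W)$ and is a vector space isomorphism onto its image, the graph $\{-\Omega_n\wedge\omega+(i-j)\alpha\wedge\beta\wedge\omega\}$ sitting inside the displayed direct sum and projecting isomorphically onto the $\alpha\wedge\beta$ factor. The entire argument is powered by the single non-vanishing scalar $i-j$, which is precisely the hypothesis $i\neq j$ and the reason the Lemma enters. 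The one genuinely delicate point — and the main obstacle — is the sign bookkeeping in the first paragraph: verifying that the $p\ge1$ terms really recombine into $\beta\wedge\{\Omega_n,\Omega\}$ with the correct Koszul signs, and that wedging by $\alpha$ commutes with $\{\Omega_n,\cdot\}$ up to the expected sign. Once the master identity is confirmed, the degenerate ranges (such as $i=n$ or $j=n$, where $\Omega_n\wedge$ vanishes) require no special treatment, because the injectivity is supplied entirely by the $\alpha\wedge\beta$ component.
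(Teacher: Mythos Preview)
Your proposal is correct and follows the paper's intended route: the paper offers no argument beyond ``by a straightforward computation on super Poisson brackets,'' and what you have written is precisely that computation, organized around the clean identity $\partial\Omega=-\Omega_n\wedge\iota_{X_0}(\Omega)-\beta\wedge\{\Omega_n,\Omega\}$, which then reduces everything to the preceding Lemma. Your treatment is in fact more careful than the paper's statement, since you correctly note that for the first restriction the image is only the graph $\{-\Omega_n\wedge\omega+(i-j)\,\alpha\wedge\beta\wedge\omega\}$ inside the displayed sum rather than the full sum, with injectivity detected by the $\alpha\wedge\beta$-component.
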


Let us now give the cases for which $\ker(\partial)$ can be obtained. The following lemma is easy:
\begin{lem}\label{lem2.3}
We have $\partial\left( \Wedge^i(V)\otimes \Wedge^i(W) \right) =\partial\left(\beta\wedge \Wedge^i(V)\otimes \Wedge^j(W)\right)=\{0\}$ with $i,j\geq 0$. Moreover, $\partial\left(\alpha\wedge\beta\wedge\Wedge^i(V)\otimes \Wedge^j(W)\right)\subset \partial\left(\Wedge^{i+1}(V)\otimes \Wedge^{j+1}(W)\right)$ for all $i,\ j\geq 0$, $i\neq j$ and 
\begin{enumerate}
	\item[(i)] $\partial\left(\alpha\wedge\beta\wedge\Wedge^i(V)\otimes \Wedge^i(W)\right)=\beta\wedge\Omega_n\wedge \Wedge^i(V)\otimes \Wedge^i(W)$,
	\item[(ii)] $\partial\left(\alpha\wedge\Wedge^i(V)\otimes \Wedge^i(W)\right)=\Omega_n\wedge \Wedge^i(V)\otimes \Wedge^i(W)$.
\end{enumerate}
\end{lem}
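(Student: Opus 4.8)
The plan is to use that $\partial=-\{I,\cdot\}$ is the Chevalley--Eilenberg differential of $\g_{2n+2}$, hence a graded derivation of degree $+1$ of $(\Wedge(\g_{2n+2}^*),\wedge)$, and to reduce everything to its values on the generating $1$-forms. From $\partial\xi(U,W)=-\xi([U,W])$ together with $[Y_0,X_i]=X_i$, $[Y_0,Y_i]=-Y_i$, $[X_i,Y_i]=X_0$ one reads off
\[
\partial\beta=0,\qquad \partial\alpha=-\Omega_n,\qquad \partial\alpha_i=-\beta\wedge\alpha_i,\qquad \partial\beta_i=\beta\wedge\beta_i,
\]
and these are consistent with the super Poisson brackets (i)--(iii) recorded above, so no new computation is required here.

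The computational heart of the proof is the single identity
\[
\partial\phi=(j-i)\,\beta\wedge\phi\qquad\text{for }\phi\in\Wedge^i(V)\otimes\Wedge^j(W).
\]
I would prove it on a monomial $\phi=\alpha_{k_1}\wedge\dots\wedge\alpha_{k_i}\wedge\beta_{l_1}\wedge\dots\wedge\beta_{l_j}$ by applying the Leibniz rule factor by factor: since each $\partial\alpha_{k_r}$ and each $\partial\beta_{l_s}$ produces exactly one $\beta$, every resulting term is proportional to $\beta\wedge\phi$ once that $\beta$ is moved to the front. Collecting signs, each of the $i$ $\alpha$-factors contributes $-\beta\wedge\phi$ and each of the $j$ $\beta$-factors contributes $+\beta\wedge\phi$, giving the coefficient $j-i$. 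Specialising to $i=j$ yields $\partial(\Wedge^i(V)\otimes\Wedge^i(W))=\{0\}$, the first vanishing claim. The second follows at once: for $\phi\in\Wedge^i(V)\otimes\Wedge^j(W)$ one has $\partial(\beta\wedge\phi)=-\beta\wedge\partial\phi=-(j-i)\,\beta\wedge\beta\wedge\phi=0$ because $\beta\wedge\beta=0$.

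For the two displayed identities I would take $\phi\in\Wedge^i(V)\otimes\Wedge^i(W)$, so that $\partial\phi=0$. Using $\partial\alpha=-\Omega_n$ and $\partial(\alpha\wedge\beta)=\partial\alpha\wedge\beta=-\beta\wedge\Omega_n$, the Leibniz rule gives $\partial(\alpha\wedge\phi)=-\Omega_n\wedge\phi$ and $\partial(\alpha\wedge\beta\wedge\phi)=-\beta\wedge\Omega_n\wedge\phi$; as $\phi$ runs over $\Wedge^i(V)\otimes\Wedge^i(W)$ these span exactly $\Omega_n\wedge\Wedge^i(V)\otimes\Wedge^i(W)$ and $\beta\wedge\Omega_n\wedge\Wedge^i(V)\otimes\Wedge^i(W)$ respectively (the overall sign is immaterial for an equality of subspaces). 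Finally, for the inclusion with $i\neq j$, I would compute $\partial(\alpha\wedge\beta\wedge\phi)=-\beta\wedge\Omega_n\wedge\phi$ for $\phi\in\Wedge^i(V)\otimes\Wedge^j(W)$ — here the cross-term $\alpha\wedge\beta\wedge\partial\phi$ vanishes because $\partial\phi\in\beta\wedge(\cdots)$ and $\beta\wedge\beta=0$ — and compare it with $\partial(\Omega_n\wedge\phi)$. Since $\Omega_n\wedge\phi\in\Wedge^{i+1}(V)\otimes\Wedge^{j+1}(W)$ has bidegrees differing by $j-i\neq0$, applying the identity $\partial\psi=(j-i)\beta\wedge\psi$ to $\psi=\Omega_n\wedge\phi$ gives $\partial(\Omega_n\wedge\phi)=(j-i)\,\beta\wedge\Omega_n\wedge\phi$, whence $\partial(\alpha\wedge\beta\wedge\phi)=-\tfrac{1}{j-i}\,\partial(\Omega_n\wedge\phi)\in\partial(\Wedge^{i+1}(V)\otimes\Wedge^{j+1}(W))$.

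The only delicate point is the sign bookkeeping in the identity $\partial\phi=(j-i)\beta\wedge\phi$ and the observation that the cross-term in the last step dies; everything else is formal once the derivation property and the four generator values are in hand. I do not anticipate any essential obstacle beyond this bookkeeping.
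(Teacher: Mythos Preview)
Your proof is correct. The paper does not actually supply a proof of this lemma: it is introduced only with the phrase ``The following lemma is easy,'' so there is no argument to compare against. Your approach---computing $\partial$ on the generators $\alpha,\beta,\alpha_i,\beta_i$ and then establishing the single identity $\partial\phi=(j-i)\,\beta\wedge\phi$ for $\phi\in\Wedge^i(V)\otimes\Wedge^j(W)$ by the Leibniz rule---is exactly the natural direct verification, and it is consistent with the paper's recorded super Poisson brackets (for instance, your identity recovers $\{I,\alpha_i\wedge\alpha_j\}=2\beta\wedge\alpha_i\wedge\alpha_j$ and $\{I,\beta_i\wedge\beta_j\}=-2\beta\wedge\beta_i\wedge\beta_j$). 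The sign bookkeeping and the handling of the cross-term in the final inclusion are both fine.
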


By the reason shown in (i) and (ii) of Lemma \ref{lem2.3} we set the map \[\phi_{k_1,k_2,n}:\Wedge{}^{k_1}(\alpha_1,...,\alpha_n)\otimes\Wedge{}^{k_2}(\beta_1,...,\beta_n)\rightarrow \Wedge{}^{k_1+1}(\alpha_1,...,\alpha_n)\otimes\Wedge{}^{k_2+1}(\beta_1,...,\beta_n)\] defined by $\phi_{k_1,k_2,n} (\omega)=\Omega_n\wedge\omega$ then we have the following result.
\begin{prop}\hfill

\begin{enumerate}
	\item[(i)] If $k$ is even then
	\[\dim\ker(\partial_k)=\begin{pmatrix} n \\ \frac{k}{2}\end{pmatrix}\begin{pmatrix} n \\ \frac{k}{2}\end{pmatrix} + \sum_{i=0}^{k-1} \begin{pmatrix} n \\ i\end{pmatrix}\begin{pmatrix} n+1 \\ k-1-i\end{pmatrix} + \dim\ker\phi_{\frac{k-2}{2},\frac{k-2}{2},n}
	-\begin{pmatrix} n \\ \frac{k-2}{2}\end{pmatrix}\begin{pmatrix} n \\ \frac{k-2}{2}\end{pmatrix}.\]
	
	\item[(ii)] If $k$ is odd then
	\[\dim\ker(\partial_k)=\dim\ker\phi_{\frac{k-1}{2},\frac{k-1}{2},n} + \sum_{i=0}^{k-1} \begin{pmatrix} n \\ i\end{pmatrix}\begin{pmatrix} n+1 \\ k-1-i\end{pmatrix}.\]
\end{enumerate}

\end{prop}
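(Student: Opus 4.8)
The plan is to decompose the full exterior algebra $\Wedge(\g_{2n+2}^*)$ according to whether each monomial contains the factors $\alpha$ and $\beta$, and then analyze how the differential $\partial = -\{I, \cdot\}$ acts on each piece. Every element of $\Wedge^k(\g_{2n+2}^*)$ is a sum of four types of monomials built from $V = \spa\{\alpha_i\}$ and $W = \spa\{\beta_i\}$: pure words $\Wedge^i(V)\otimes\Wedge^j(W)$, words $\beta\wedge(\cdots)$, words $\alpha\wedge(\cdots)$, and words $\alpha\wedge\beta\wedge(\cdots)$, where the bidegrees $(i,j)$ run over all nonnegative integers with $2i+2j$ (plus the appropriate shift from the $\alpha,\beta$ factors) equal to $k$. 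First I would write down this direct-sum decomposition explicitly at total degree $k$, keeping careful track of how $i$ and $j$ are constrained by $k$ and by whether $\alpha$ and/or $\beta$ are present.

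Next I would invoke the structural results already established. By Lemma \ref{lem2.3}, $\partial$ kills both $\beta\wedge\Wedge^i(V)\otimes\Wedge^j(W)$ and the ``balanced'' pure pieces $\Wedge^i(V)\otimes\Wedge^i(W)$, so these contribute entirely to the kernel; I would add up their dimensions using binomial coefficients, noting that the $\beta\wedge(\cdots)$ block of total degree $k$ has dimension $\sum_{i=0}^{k-1}\binom{n}{i}\binom{n+1}{k-1-i}$ after combining the $\beta$-with-and-without-$\alpha$ contributions via Vandermonde-type bookkeeping. By the Corollary, $\partial$ restricted to the ``unbalanced'' pieces (where $i\neq j$) maps $\alpha\wedge\Wedge^i(V)\otimes\Wedge^j(W)$ and $\Wedge^i(V)\otimes\Wedge^j(W)$ isomorphically onto their images, so these unbalanced blocks contribute nothing to the kernel. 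This isolates the only genuinely interesting contributions to $\ker(\partial_k)$: the balanced pure pieces (already in the kernel) together with the balanced $\alpha\wedge(\cdots)$ and $\alpha\wedge\beta\wedge(\cdots)$ pieces, whose kernel is governed precisely by the map $\phi_{k_1,k_2,n}(\omega)=\Omega_n\wedge\omega$ from parts (i) and (ii) of Lemma \ref{lem2.3}.

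The heart of the computation is therefore the balanced block in bidegree $(\tfrac{k-2}{2},\tfrac{k-2}{2})$ for $k$ even, respectively $(\tfrac{k-1}{2},\tfrac{k-1}{2})$ for $k$ odd, where one must determine how much of $\alpha\wedge\Wedge^i(V)\otimes\Wedge^i(W)$ and $\alpha\wedge\beta\wedge\Wedge^{i}(V)\otimes\Wedge^i(W)$ lands in the kernel. For $k$ even I would split a general balanced element into an $\alpha\wedge\beta$-part, an $\alpha$-part, and a $\beta$-part, apply $\partial$ using (i) and (ii) of Lemma \ref{lem2.3}, and observe that $\partial$ of the $\alpha\wedge\beta$-piece lands inside $\partial(\Wedge^{i+1}(V)\otimes\Wedge^{i+1}(W))$; cancellation of the two resulting $\Omega_n\wedge(\cdots)$ contributions forces membership in $\ker\phi$. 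This is where the correction terms $\dim\ker\phi_{\frac{k-2}{2},\frac{k-2}{2},n}-\binom{n}{(k-2)/2}^2$ and $\dim\ker\phi_{\frac{k-1}{2},\frac{k-1}{2},n}$ enter the two formulas. For $k$ odd the analysis is simpler because there is no balanced pure piece and the $\alpha\wedge\beta$ correction is absent, which is why (ii) has the cleaner shape.

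I expect the main obstacle to be the careful combinatorial bookkeeping in the even case, specifically showing that the subtracted term $\binom{n}{(k-2)/2}^2$ is exactly the overcount coming from elements of $\alpha\wedge\beta\wedge\Wedge^i(V)\otimes\Wedge^i(W)$ whose image under $\partial$ is cancelled against the image of a balanced pure form, so that they are counted once through $\ker\phi$ rather than twice. Making the cancellation in the span of $\beta\wedge\Omega_n\wedge\Wedge^i(V)\otimes\Wedge^i(W)$ versus $\Omega_n\wedge\Wedge^i(V)\otimes\Wedge^i(W)$ precise — rather than merely dimension-counting — is the delicate point; once that is pinned down, the two dimension formulas follow by summing the kernel contributions of all four monomial types, treating the even and odd parities separately.
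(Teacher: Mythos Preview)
Your decomposition of $\Wedge^k(\g_{2n+2}^*)$ into the four blocks (pure, $\beta\wedge$, $\alpha\wedge$, $\alpha\wedge\beta\wedge$) and your use of Lemma~\ref{lem2.3} and its Corollary match the paper's implicit argument, but your accounting of the $\alpha\wedge\beta$-block is incorrect and would not produce the stated formula. Lemma~\ref{lem2.3} only asserts $\partial\bigl(\beta\wedge\Wedge^i(V)\otimes\Wedge^j(W)\bigr)=0$; it does \emph{not} say $\partial$ kills $\alpha\wedge\beta\wedge\omega$, and in fact $\partial(\alpha\wedge\beta\wedge\omega)=-\beta\wedge\Omega_n\wedge\omega$ for every $\omega\in\Wedge^i(V)\otimes\Wedge^j(W)$. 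So lumping the $(\alpha\wedge\beta)$-block with the $(\beta)$-block as ``entirely in the kernel'' to obtain $\binom{2n+1}{k-1}$ is not justified. Likewise, your claim that unbalanced pure pieces ``contribute nothing to the kernel'' is too strong: $\partial$ is injective on each $\Wedge^i(V)\otimes\Wedge^j(W)$ with $i\neq j$, but its image $\beta\wedge\Wedge^i(V)\otimes\Wedge^j(W)$ is exactly the space that receives $\partial\bigl(\alpha\wedge\beta\wedge\Wedge^{i-1}(V)\otimes\Wedge^{j-1}(W)\bigr)$, so cancellations occur \emph{there}, not where you place them.

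The correct mechanism is this. For each unbalanced $D\in\Wedge^{i'}(V)\otimes\Wedge^{j'}(W)$ with $i'+j'=k-2$, there is a unique $A=\tfrac{1}{j'-i'}\Omega_n\wedge D\in\Wedge^{i'+1}(V)\otimes\Wedge^{j'+1}(W)$ making $\alpha\wedge\beta\wedge D+A$ a kernel element; this ``diagonal'' family contributes $\binom{2n}{k-2}-[k\text{ even}]\binom{n}{(k-2)/2}^2$ to $\dim\ker\partial_k$, and together with the honest $(\beta)$-block $\binom{2n}{k-1}$ this produces $\binom{2n+1}{k-1}-[k\text{ even}]\binom{n}{(k-2)/2}^2$. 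When $k$ is even the balanced $\alpha\wedge\beta$-piece of bidegree $\bigl(\tfrac{k-2}{2},\tfrac{k-2}{2}\bigr)$ has no pure partner to cancel against (the balanced pure piece is already annihilated by $\partial$), so by Lemma~\ref{lem2.3}(i) its kernel is exactly $\ker\phi_{(k-2)/2,(k-2)/2,n}$. Thus the term $-\binom{n}{(k-2)/2}^2$ is the balanced part \emph{excluded} from the diagonal count, not an overcount coming from cancellation against balanced pure forms as you suggest; once this is understood, both formulas drop out immediately.
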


Using the formula $b_{k}(\g_{2n+2}) = \dim\ker(\partial_k) + \dim\ker(\partial_{k-1}) - \begin{pmatrix} 2n+2 \\ k-1\end{pmatrix}$, the binomial identity
\[\begin{pmatrix} n \\ k-1\end{pmatrix} + \begin{pmatrix} n \\ k\end{pmatrix}=\begin{pmatrix} n+1 \\ k\end{pmatrix}\]
and the formula
\[\sum_{i=0}^{k} \begin{pmatrix} n \\ i\end{pmatrix}\begin{pmatrix} n \\ k-i\end{pmatrix}=\begin{pmatrix} 2n \\ k\end{pmatrix}\]
we obtain the following corollary.

\begin{cor}\label{cor2.5}
The $k^{th}$ Betti numbers of $\g_{2n+2}$ are given as follows:
	
	\begin{enumerate}
		\item[(i)] If $k$ is even then \[b_{k}(\g_{2n+2})= \begin{pmatrix} n \\ \frac{k}{2}\end{pmatrix}\begin{pmatrix} n \\ \frac{k}{2}\end{pmatrix} + 2\dim\ker\phi_{\frac{k-2}{2},\frac{k-2}{2},n}
				-\begin{pmatrix} n \\ \frac{k-2}{2}\end{pmatrix}\begin{pmatrix} n \\ \frac{k-2}{2}\end{pmatrix}.\]
		\item[(ii)] If $k$ is odd then \[b_{k}(\g_{2n+2})= \begin{pmatrix} n \\ \frac{k-1}{2}\end{pmatrix}\begin{pmatrix} n \\ \frac{k-1}{2}\end{pmatrix} + \dim\ker\phi_{\frac{k-1}{2},\frac{k-1}{2},n}
				+\dim\ker\phi_{\frac{k-3}{2},\frac{k-3}{2},n}-\begin{pmatrix} n \\ \frac{k-3}{2}\end{pmatrix}\begin{pmatrix} n \\ \frac{k-3}{2}\end{pmatrix}.\]
 \end{enumerate}
\end{cor}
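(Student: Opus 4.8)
The plan is to derive Corollary \ref{cor2.5} purely algebraically from the dimension formulas for $\ker(\partial_k)$ established in the preceding Proposition, combined with the rank--nullity theorem and the two binomial identities quoted just before the statement. The content is entirely a bookkeeping computation; no new geometric input is needed.

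First I would record the Euler-type identity for the Betti numbers. Since $\partial_k:\Wedge^k(\g_{2n+2}^*)\to\Wedge^{k+1}(\g_{2n+2}^*)$ raises degree by one, we have $b_k=\dim\ker(\partial_k)-\dim\operatorname{im}(\partial_{k-1})$, and rank--nullity applied to $\partial_{k-1}$ on the space $\Wedge^{k-1}(\g_{2n+2}^*)$ of dimension $\binom{2n+2}{k-1}$ gives $\dim\operatorname{im}(\partial_{k-1})=\binom{2n+2}{k-1}-\dim\ker(\partial_{k-1})$. Hence $b_k=\dim\ker(\partial_k)+\dim\ker(\partial_{k-1})-\binom{2n+2}{k-1}$, which is exactly the formula recalled before the statement.

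The key observation is that $\partial_k$ and $\partial_{k-1}$ fall under opposite parity cases of the Proposition, so in each parity I substitute one formula of type (i) and one of type (ii). For $k$ even I use part (i) for $\dim\ker(\partial_k)$ and part (ii), with $k$ replaced by $k-1$, for $\dim\ker(\partial_{k-1})$; the two copies of $\dim\ker\phi_{(k-2)/2,(k-2)/2,n}$ then combine into the factor $2$ appearing in Corollary (i), while $\binom{n}{k/2}^2$ and $-\binom{n}{(k-2)/2}^2$ survive untouched. For $k$ odd I do the reverse, and the surviving closed-form terms are precisely $\binom{n}{(k-1)/2}^2$, $\dim\ker\phi_{(k-1)/2,(k-1)/2,n}$, $\dim\ker\phi_{(k-3)/2,(k-3)/2,n}$ and $-\binom{n}{(k-3)/2}^2$, as in Corollary (ii). In both cases the only thing left to verify is that the two ``mixed'' sums $\sum_{i=0}^{k-1}\binom{n}{i}\binom{n+1}{k-1-i}$ and $\sum_{i=0}^{k-2}\binom{n}{i}\binom{n+1}{k-2-i}$ cancel exactly against the term $\binom{2n+2}{k-1}$.

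This cancellation is where the two quoted identities do the work, and it is the only genuinely computational step. Using Pascal's rule $\binom{n+1}{j}=\binom{n}{j}+\binom{n}{j-1}$ I would split each $\binom{n+1}{\,\cdot\,}$ factor and rewrite each mixed sum as a sum of two symmetric Vandermonde convolutions $\sum_i\binom{n}{i}\binom{n}{q-i}=\binom{2n}{q}$, obtaining $\sum_{i=0}^{k-1}\binom{n}{i}\binom{n+1}{k-1-i}=\binom{2n}{k-1}+\binom{2n}{k-2}=\binom{2n+1}{k-1}$ and likewise $\sum_{i=0}^{k-2}\binom{n}{i}\binom{n+1}{k-2-i}=\binom{2n+1}{k-2}$. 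One final application of Pascal's rule gives $\binom{2n+1}{k-1}+\binom{2n+1}{k-2}=\binom{2n+2}{k-1}$, so the mixed sums precisely annihilate the rank--nullity term and leave the stated expressions. I expect no real obstacle beyond careful index bookkeeping; the main thing to watch is the parity swap between $\partial_k$ and $\partial_{k-1}$ and confirming that the half-integer indices $(k-1)/2$, $(k-2)/2$, $(k-3)/2$ land correctly in each case (so that one only ever evaluates the Proposition at integer arguments).
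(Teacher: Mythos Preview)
Your proposal is correct and follows exactly the route the paper indicates: it substitutes the two parity cases of the preceding Proposition into the rank--nullity formula $b_k=\dim\ker(\partial_k)+\dim\ker(\partial_{k-1})-\binom{2n+2}{k-1}$ and then uses Pascal's rule together with the Vandermonde convolution to collapse the two mixed sums into $\binom{2n+1}{k-1}+\binom{2n+1}{k-2}=\binom{2n+2}{k-1}$. The paper merely names these ingredients without writing out the cancellation, so your write-up is a faithful expansion of the intended argument.
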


Hence, it remains to compute $\dim\ker\left(\phi_{k,k,n}\right)$. Consider the power $\phi^m_{k_1,k_2,n}$ of the map $\phi_{k_1,k_2,n}$ and let \[K(m,k_1,k_2,n)=\dim\ker\left(\phi^m_{k_1,k_2,n}\right)\] then one has:
\begin{lem}\hfill
\begin{enumerate}
	\item[(i)] The map \[\theta^m_{k_1,k_2,n+1}:\ker\left(\phi^{m+1}_{k_1-1,k_2-1, n}\right) \oplus \ker\left(\phi^m_{k_1-1,k_2,n}\right) \oplus \ker\left(\phi^m_{k_1,k_2-1,n}\right) \]\[\oplus \ker\left(\phi^{m-1}_{k_1,k_2,n}\right)\rightarrow \ker\left(\phi^m_{k_1,k_2,n+1}\right)\]
	defined by
	\[\theta^m_{k_1,k_2,n+1} (\omega_1,\omega_2,\omega_3,\omega_4) = \alpha_{n+1}\wedge\beta_{n+1}\wedge \omega_1+\alpha_{n+1}\wedge \omega_2 + \beta_{n+1}\wedge \omega_3 \]\[+\omega_4-\frac{1}{m}\phi_{k_1-1,k_2-1,n}(\omega_1)\]
	is a vector space isomorphism.
	\item[(ii)] $K(m,k_1,k_2,n)=K(m+1,k_1-1,k_2-1,n-1) +K(m,k_1-1,k_2,n-1)+K(m,k_1,k_2-1,n-1)+K(m-1,k_1,k_2,n-1)$.
	\end{enumerate}
\end{lem}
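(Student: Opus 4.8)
The plan is to prove both the isomorphism statement (i) and the recursion (ii) simultaneously, since (ii) is an immediate consequence of (i): once $\theta^m_{k_1,k_2,n+1}$ is shown to be a vector space isomorphism, taking dimensions of the source and target gives exactly the claimed identity $K(m,k_1,k_2,n) = K(m+1,k_1-1,k_2-1,n-1) + K(m,k_1-1,k_2,n-1) + K(m,k_1,k_2-1,n-1) + K(m-1,k_1,k_2,n-1)$, after relabeling $n+1$ as $n$. So the entire content lies in part (i).

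First I would set up the combinatorial decomposition underlying the map. Write $\Omega_{n+1} = \Omega_n + \alpha_{n+1}\wedge\beta_{n+1}$, and decompose any element of $\Wedge^{k_1}(\alpha_1,\dots,\alpha_{n+1})\otimes\Wedge^{k_2}(\beta_1,\dots,\beta_{n+1})$ according to whether each of $\alpha_{n+1}$, $\beta_{n+1}$ appears. This yields the four-block direct sum decomposition into pieces that contain both $\alpha_{n+1}\wedge\beta_{n+1}$, only $\alpha_{n+1}$, only $\beta_{n+1}$, or neither; these four pieces are naturally identified with $\Wedge^{k_1-1}(V_n)\otimes\Wedge^{k_2-1}(W_n)$, $\Wedge^{k_1-1}(V_n)\otimes\Wedge^{k_2}(W_n)$, $\Wedge^{k_1}(V_n)\otimes\Wedge^{k_2-1}(W_n)$, and $\Wedge^{k_1}(V_n)\otimes\Wedge^{k_2}(W_n)$ respectively, matching the four summands in the source of $\theta^m_{k_1,k_2,n+1}$. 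The key computation is to express $\phi_{k_1,k_2,n+1}^m = (\Omega_{n+1}\wedge(\cdot))^m$ in these coordinates. Since $\alpha_{n+1}\wedge\beta_{n+1}$ squares to zero and commutes (in the appropriate sign convention) with $\Omega_n$, the binomial-type expansion $\Omega_{n+1}^m = \Omega_n^m + m\,\alpha_{n+1}\wedge\beta_{n+1}\wedge\Omega_n^{m-1}$ holds, and this is the algebraic heart of the argument.

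Next I would verify that $\theta^m_{k_1,k_2,n+1}$ lands in $\ker(\phi^m_{k_1,k_2,n+1})$: applying $\Omega_{n+1}^m$ to each of the four output terms and using the displayed expansion of $\Omega_{n+1}^m$ together with the defining kernel conditions on $\omega_1,\dots,\omega_4$. The correction term $-\tfrac{1}{m}\phi_{k_1-1,k_2-1,n}(\omega_1)$ is precisely the adjustment needed to cancel the leftover $\alpha_{n+1}\wedge\beta_{n+1}\wedge\Omega_n^{m-1}\wedge(\cdots)$ contribution coming from the $\omega_4$-block's interaction, so checking that these cancellations work out is the crux of showing well-definedness. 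Injectivity should follow by reading off the four components from the block decomposition (the first three blocks are independent, and the fourth is recovered after subtracting), and surjectivity follows by a dimension count or by inverting the block triangular structure explicitly.

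The main obstacle I expect is the bookkeeping of Koszul signs and the verification that the correction term has exactly the right coefficient $-1/m$. Because we are working in an exterior algebra, wedging with $\alpha_{n+1}$, $\beta_{n+1}$, and $\Omega_n$ introduces sign factors that depend on the degrees $k_1,k_2$, and one must confirm these signs are consistent across all four blocks so that the map is genuinely linear and kernel-valued; a single sign error would break either well-definedness or injectivity. A secondary subtlety is the boundary behavior when $m=1$ (where the $\omega_4$-summand $\ker(\phi^{m-1}) = \ker(\phi^0)$ should be interpreted as the whole space, i.e. $\phi^0 = \id$ has trivial kernel, so that term vanishes) and when $k_1$ or $k_2$ equals $0$ (where the blocks requiring $\alpha_{n+1}$ or $\beta_{n+1}$ degenerate); I would state these edge conventions explicitly so the isomorphism and the resulting recursion hold uniformly. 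Once signs and edge cases are settled, the isomorphism is established and (ii) is immediate.
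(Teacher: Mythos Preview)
Your approach is essentially the same as the paper's: both rest on the four-block decomposition according to the presence of $\alpha_{n+1}$ and $\beta_{n+1}$, together with the expansion $\Omega_{n+1}^m = \Omega_n^m + m\,\alpha_{n+1}\wedge\beta_{n+1}\wedge\Omega_n^{m-1}$. The paper simply declares injectivity clear and proves surjectivity by decomposing an arbitrary $\omega\in\ker(\phi^m_{k_1,k_2,n+1})$ and reading off the four kernel conditions, which is the explicit block-triangular inversion you allude to.

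Two small remarks. First, your proposed ``dimension count'' route to surjectivity is circular: equality of dimensions is precisely part (ii), which you are deducing \emph{from} the isomorphism, so you must use the explicit inverse (as the paper does). Second, your worry about Koszul signs is unnecessary here: $\Omega_n$, $\alpha_{n+1}\wedge\beta_{n+1}$, and $\Omega_n^{m-1}$ are all of even degree, hence commute with everything in the exterior algebra, so no sign bookkeeping is required to verify well-definedness or the coefficient $-\tfrac{1}{m}$. (Also, your parenthetical about $m=1$ is garbled: $\phi^0=\id$ has kernel $\{0\}$, not ``the whole space''; your conclusion that the term vanishes is correct, but the phrasing contradicts itself.)
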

\begin{proof}\hfill

\begin{enumerate}
	\item[(i)] The map $\theta^m_{k_1,k_2,n+1}$ is clearly injective. To prove $\theta^m_{k_1,k_2,n+1}$ surjective, let us consider $\omega\in\Wedge^{k_1}(\alpha_1,...,\alpha_{n+1})\otimes\Wedge^{k_2}(\beta_1,...,\beta_{n+1})$ such that $\Omega^m_{n+1}\wedge\omega=0$. Observe that $\omega$ can be written in the form $\omega=\alpha_{n+1}\wedge\beta_{n+1}\wedge\omega_1+\alpha_{n+1}\wedge\omega_2+\beta_{n+1}\wedge\omega_3+\omega_4$ where $\omega_1\in\Wedge^{k_1-1}(\alpha_1,...,\alpha_n)\otimes\Wedge^{k_2-1}(\beta_1,...,\beta_n)$, $\omega_2\in\Wedge^{k_1-1}(\alpha_1,...,\alpha_n)\otimes\Wedge^{k_2}(\beta_1,...,\beta_n)$, $\omega_3\in\Wedge^{k_1}(\alpha_1,...,\alpha_n)\otimes\Wedge^{k_2-1}(\beta_1,...,\beta_n)$ and $\omega_4\in\Wedge^{k_1}(\alpha_1,...,\alpha_n)\otimes\Wedge^{k_2}(\beta_1,...,\beta_n)$. By $\Omega^m_{n+1}\wedge\omega=0$, we obtain $\Omega^m_n\wedge\omega_2 = \Omega^m_n\wedge\omega_3 = \Omega^m_n\wedge\omega_4 = 0$, $\Omega^m_n\wedge\omega_1 = - m\Omega^{m-1}_n\wedge\omega_4$. It implies $\Omega^{m+1}_n\wedge\omega_1=0$ and then $\omega_1\in\ker\left(\phi^{m+1}_{k_1-1,k_2-1,n}\right)$. Moreover, $\Omega_n\wedge\omega_1 + m\omega_4\in\ker\left(\phi^{m-1}_{k_1,k_2,n}\right)$ means 
	\[\theta^m_{k_1,k_2,n+1} \left(\omega_1,\omega_2,\omega_3,\omega_4+\frac{1}{m}\phi_{k_1-1,k_2-1,n}(\omega_1)\right)=\omega.\]
	\item[(ii)] The assertion (2) follows (1).
\end{enumerate}
\end{proof}

To calculate $K(m,k_1,k_2,n)$, we use the following boundary conditions from the definition of $\phi^m_{k_1,k_2,n}$ in which we assume $\phi^0_{k_1,k_2,n}$ is the identity map:
\begin{enumerate}
	\item $K(0,k_1,k_2,n) = 0$ for all $k_1,\ k_2,\ n\geq 0$ .
	\item $K(m,0,0,n) = \begin{cases} 0, & \mbox{if } m\leq n, \\ 1, & \mbox{if } m > n.\end{cases}$
	\item $K(m,0,1,n) = K(m,1,0,n)=\begin{cases} 0, & \mbox{if } m=0 \mbox{ or } n > m, \\ n, & \mbox{if } 1\leq n\leq m.\end{cases}$
	\item $K(m,k_1,k_2,0) = \begin{cases} 1, & \mbox{if } m\geq 1, k_1=k_2=0, \\ 0, & \mbox{otherwise}.\end{cases}$
\end{enumerate}

By the condition (2) we extend $K(m,k_1,k_2,n)=0$ for negative $k_1$ or $k_2$ and by the condition (1) we set the condition (5) by $K(-m,k_1,k_2,n)=-K(m,k_1-m,k_2-m,n)$.

\begin{lem}\hfill

\[K(m,k,k,n)=\sum_{p=0}^{n} \sum_{q=0}^{n}\begin{pmatrix} n \\ p\end{pmatrix}\begin{pmatrix} n \\ q\end{pmatrix} K(m+n-p-q,k-n+p,k-n+q,0).\]
\end{lem}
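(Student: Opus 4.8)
The plan is to prove the recursion $K(m,k,k,n) = \sum_{p,q=0}^{n} \binom{n}{p}\binom{n}{q} K(m+n-p-q,k-n+p,k-n+q,0)$ by induction on $n$, using the recursion from part (ii) of the previous lemma as the inductive engine. First I would verify the base case $n=0$: the right-hand sum collapses to the single term $p=q=0$, giving $\binom{0}{0}^2 K(m,k,k,0) = K(m,k,k,0)$, which matches the left-hand side trivially.

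For the inductive step, I would substitute the four-term recursion $K(m,k,k,n) = K(m+1,k-1,k-1,n-1) + K(m,k-1,k,n-1) + K(m,k,k-1,n-1) + K(m-1,k,k,n-1)$ and apply the induction hypothesis to each summand at level $n-1$. The technical point here is that the stated formula is written only for the balanced case $K(m,k,k,n)$, whereas the four-term recursion produces unbalanced terms like $K(m,k-1,k,n-1)$ with first two indices differing by one. The cleanest approach is to prove the more general statement
\[
K(m,k_1,k_2,n) = \sum_{p=0}^{n}\sum_{q=0}^{n} \binom{n}{p}\binom{n}{q}\, K(m+n-p-q,\, k_1-n+p,\, k_2-n+q,\, 0)
\]
for all $k_1,k_2$, and then specialize to $k_1=k_2=k$. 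Each of the four terms on the right of the recursion, after applying the hypothesis for $n-1$, becomes a double sum over indices running from $0$ to $n-1$; the task is to reindex these four sums so that they combine into a single double sum over $0$ to $n$ weighted by $\binom{n}{p}\binom{n}{q}$.

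The main obstacle, and the heart of the argument, is exactly this reindexing and recombination. For the first term I would write $\binom{n-1}{p'}\binom{n-1}{q'}$ with the shift $p'=p-1$, $q'=q-1$ coming from the decrement in both upper indices; for the second term the $\alpha$-index shifts while the $\beta$-index does not, and symmetrically for the third; the fourth term keeps both indices undecremented but decrements $m$. Matching the $K(\cdot,\cdot,\cdot,0)$ arguments requires checking that in all four cases the combined argument is $(m+n-p-q,\,k_1-n+p,\,k_2-n+q,\,0)$ for a common reindexed $(p,q)$ — I expect that after the shifts the four contributions to the coefficient of a fixed $K(m+n-p-q,k_1-n+p,k_2-n+q,0)$ are precisely $\binom{n-1}{p-1}\binom{n-1}{q-1}$, $\binom{n-1}{p-1}\binom{n-1}{q}$, $\binom{n-1}{p}\binom{n-1}{q-1}$, and $\binom{n-1}{p}\binom{n-1}{q}$. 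Summing these four and applying Pascal's rule $\binom{n-1}{p-1}+\binom{n-1}{p}=\binom{n}{p}$ twice yields $\binom{n}{p}\binom{n}{q}$, completing the step.

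I would close by specializing $k_1=k_2=k$ to recover the stated formula. The boundary conventions extending $K$ to negative indices (conditions stated before the lemma) are needed to make the edge terms of the reindexed sums vanish correctly, so I would remark that these ensure the Pascal recombination goes through uniformly at the extremes $p,q\in\{0,n\}$ without special-casing.
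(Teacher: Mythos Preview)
Your approach is correct and essentially the same as the paper's: both arguments iterate the four-term recursion from the previous lemma and recombine the resulting four binomial products $\binom{n-1}{p-\epsilon}\binom{n-1}{q-\delta}$ (with $\epsilon,\delta\in\{0,1\}$) into $\binom{n}{p}\binom{n}{q}$ via Pascal's rule. The only cosmetic difference is that the paper fixes $n$ and inducts on an auxiliary depth parameter $l$, proving the intermediate identity $K(m,k,k,n)=\sum_{p,q=0}^{l}\binom{l}{p}\binom{l}{q}K(m+l-p-q,k-l+p,k-l+q,n-l)$ and then setting $l=n$, which sidesteps having to state the general $(k_1,k_2)$ version; also note that your assignment of shifts to terms is swapped (the term $K(m+1,k_1-1,k_2-1,n-1)$ actually contributes $\binom{n-1}{p}\binom{n-1}{q}$ with no shift, and $K(m-1,k_1,k_2,n-1)$ contributes $\binom{n-1}{p-1}\binom{n-1}{q-1}$), but since the set of four products is unchanged the sum is unaffected.
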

\begin{proof}
By induction on $l$, we prove that 
\[K(m,k,k,n)=\sum_{p=0}^{l} \sum_{q=0}^{l}\begin{pmatrix} l \\ p\end{pmatrix}\begin{pmatrix} l \\ q\end{pmatrix} K(m+l-p-q,k-l+p,k-l+q,n-l).\]
Let $l=n$ to get the lemma.
\end{proof}

The Betti numbers of $\g_{2n+2}$ is in the case $m=1$. By the conditions (4) and (5) we reduce the following. 
\begin{cor}\hfill
\[K(1,k,k,n) = \begin{cases} 0,& \mbox{if } k < \frac{1}{2}n, 
\\ \begin{pmatrix} n \\ k\end{pmatrix}\begin{pmatrix} n \\  k\end{pmatrix}-\begin{pmatrix} n \\ k+1\end{pmatrix}\begin{pmatrix} n \\  k+1\end{pmatrix}, & \mbox{if } k \geq \frac{1}{2}n.\end{cases}\]
\end{cor}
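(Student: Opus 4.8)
The plan is to specialize the summation formula of the previous lemma to $m=1$,
\[
K(1,k,k,n)=\sum_{p=0}^{n}\sum_{q=0}^{n}\binom{n}{p}\binom{n}{q}\,K\bigl(1+n-p-q,\,k-n+p,\,k-n+q,\,0\bigr),
\]
and to evaluate each summand by hand using the boundary conditions (1)--(5). The decisive observation is the rigidity of the base case $n=0$: by (4), together with the conventions that $K$ vanishes for a negative middle argument and that $K(-m,k_1,k_2,0)=-K(m,k_1-m,k_2-m,0)$, the value $K(M,a,b,0)$ is nonzero only when $a=b$. Hence every surviving term of the double sum satisfies $k-n+p=k-n+q$, i.e. $p=q$, and the double sum collapses to a single sum over the diagonal $p=q$, with $\binom{n}{p}\binom{n}{q}=\binom{n}{p}^2$, $a=b=k-n+p$ and first argument $M=1+n-2p$.

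First I would isolate the terms with $M\geq 1$. By (4) such a term contributes $+1$ exactly when $a=k-n+p=0$, that is $p=n-k$, and then $M=2k-n+1\geq 1$, which forces $k\geq n/2$; its weight is $\binom{n}{n-k}^2=\binom{n}{k}^2$. Next I would treat the terms with $M\leq -1$ through condition (5): here $K(M,a,a,0)=-K(-M,a+M,a+M,0)$ is nonzero precisely when $a+M=0$, i.e. $a=-M$, which gives $p=k+1$ and $M=n-1-2k\leq -1$, again forcing $k\geq n/2$; its weight is $-\binom{n}{k+1}^2$. The terms with $M=0$ vanish by (1). Collecting, for $k\geq n/2$ one gets $K(1,k,k,n)=\binom{n}{k}^2-\binom{n}{k+1}^2$, while for $k<n/2$ neither family of terms survives and $K(1,k,k,n)=0$, which is the claimed formula.

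The routine part is the arithmetic of locating the surviving indices; the step requiring genuine care is the bookkeeping around condition (5), since the negative-first-argument extension is exactly what produces the subtracted term $\binom{n}{k+1}^2$, and one must confirm the threshold cases. In particular I would verify the boundaries $k=(n-1)/2$ (where the diagonal term has $M=0$ and thus drops out, consistent with the value $0$) and $k=n/2$ for even $n$ (where the first family gives $M=1$ and the second gives $M=-1$, so both contribute), as well as $k=n$ (where $\binom{n}{k+1}=0$ and one recovers $K(1,n,n,n)=1$, matching the fact that $\Omega_n\wedge(-)$ annihilates the top degree $\Wedge^n(V)\otimes\Wedge^n(W)$). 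These checks confirm that no term is double counted and that the case split at $k=n/2$ is precisely the one appearing in the statement.
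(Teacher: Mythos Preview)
Your argument is correct and follows exactly the route the paper indicates: specialize the lemma to $m=1$ and use conditions (4) and (5) (together with the vanishing conventions (1) and the extension for negative indices) to reduce the double sum. The paper merely says ``by the conditions (4) and (5) we reduce the following''; you have supplied precisely the details of that reduction, including the collapse to the diagonal $p=q$ and the identification of the two surviving indices $p=n-k$ and $p=k+1$.
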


Finally, by applying this formula we obtain the Betti number of $\g_{2n+2}$ according to Corollary \ref{cor2.5}.
\section{Appendix 1: Another way to get the Betti numbers of $\g_{2n+2}$}
In this part, we shall give another way to get the Betti numbers of $\g_{2n+2}$. It is based on the following result.
\begin{prop}\cite{Pou05}\hfill

Let $\g$ be an extension of the one-dimensional Lie algebra $\left\langle z\right\rangle$ by the Heisenberg Lie algebra $\hk_{2n+1}$, for some $n$,
\[1\longrightarrow\hk_{2n+1}\longrightarrow\g\longrightarrow\left\langle z\right\rangle\longrightarrow 0
\]
such that $\g$ acts trivially on the center $\zk=\left\langle w\right\rangle$ of $\hk_{2n+1}$. Let $\fk=\g/\zk$. Then
\[b_k(\g) =
\begin{cases}
b_k(\fk) \ \hspace{2.45cm}\text{for } k=0 \ \text{or }k=1, \\
b_k(\fk)-b_{k-2}(\fk) \ \hspace{1cm}\text{for } 2\leq k \leq n ,\\
2\left[b_{n+1}(\fk)-b_{n-1}(\fk)\right] \ \ \ \text{for } k=n+1, \\
b_{k-1}(\fk)-b_{k+1}(\fk) \ \hspace{0.6cm}\text{for } n+2\leq k \leq 2n,\\
b_{k-1}(\fk) \ \hspace{2cm}\text{for } k=2n+1 \ \text{or }k=2n+2.
\end{cases}\]
\end{prop}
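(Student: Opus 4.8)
Since $\g$ acts trivially on $\zk=\langle w\rangle$, the element $w$ is central in $\g$, so the given sequence is equivalently a \emph{central} extension $0\to\zk\to\g\to\fk\to 0$ of the $(2n+1)$-dimensional algebra $\fk$ by the line $\zk$. The plan is to feed this central extension into a Gysin-type long exact sequence and then reduce everything to the rank of a single cup-product operator on $H^\bullet(\fk)$. First I would fix a linear splitting $\g=\fk\oplus\zk$ and set $\zeta:=w^*$, so that each cochain of $\g$ is written uniquely as $\omega_0+\zeta\wedge\omega_1$ with $\omega_0,\omega_1\in\Wedge(\fk^*)$. Centrality of $w$ forces the Chevalley--Eilenberg differential of $\g$ to satisfy $d\zeta=\eta$ and $d|_{\Wedge(\fk^*)}=d_\fk$, where $\eta\in\Wedge^2(\fk^*)$ is the extension cocycle and $[\eta]\in H^2(\fk)$ its class, whence
\[d(\omega_0+\zeta\wedge\omega_1)=d_\fk\omega_0+\eta\wedge\omega_1-\zeta\wedge d_\fk\omega_1.\]
Thus $\Wedge(\fk^*)$ is a subcomplex with quotient $\zeta\wedge\Wedge(\fk^*)$, and the resulting short exact sequence of complexes produces the long exact sequence
\[\cdots\to H^{k-2}(\fk)\xrightarrow{\,\cup[\eta]\,}H^{k}(\fk)\to H^{k}(\g)\to H^{k-1}(\fk)\xrightarrow{\,\cup[\eta]\,}H^{k+1}(\fk)\to\cdots,\]
in which the connecting homomorphism is cup product with $[\eta]$.

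Splitting this into short exact sequences $0\to\operatorname{coker}\big(\cup[\eta]\colon H^{k-2}(\fk)\to H^k(\fk)\big)\to H^k(\g)\to\ker\big(\cup[\eta]\colon H^{k-1}(\fk)\to H^{k+1}(\fk)\big)\to 0$ and writing $r_j:=\operatorname{rank}\big(\cup[\eta]\colon H^j(\fk)\to H^{j+2}(\fk)\big)$, I would obtain the master formula
\[b_k(\g)=\big(b_k(\fk)-r_{k-2}\big)+\big(b_{k-1}(\fk)-r_{k-1}\big),\]
so the whole statement reduces to determining the ranks $r_j$.

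Here two structural facts about $\fk$ do the work. First, $\fk$ is unimodular (the generator $z$ acts on $\hk_{2n+1}/\zk\cong\CC^{2n}$ with trace zero), so $\fk$ satisfies Poincar\'e duality $b_j(\fk)=b_{2n+1-j}(\fk)$; in particular the middle pair coincides, $b_n(\fk)=b_{n+1}(\fk)$. Second, the restriction of $\eta$ to the abelian ideal $\ak:=\hk_{2n+1}/\zk$ is the standard symplectic form, and the corresponding $\mathfrak{sl}_2$/hard-Lefschetz theory on $\Wedge(\ak^*)$ should transfer to the claim that $\cup[\eta]\colon H^j(\fk)\to H^{j+2}(\fk)$ is injective for $j\le n-1$ and surjective for $j\ge n$. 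Granting this, one reads off $r_{k-2},r_{k-1}$ in each regime and substitutes into the master formula: for $0\le k\le 1$ the relevant source groups vanish or the rank is full, giving $b_k(\g)=b_k(\fk)$; for $2\le k\le n$ both operators are injective, giving $b_k(\g)=b_k(\fk)-b_{k-2}(\fk)$; for $n+2\le k\le 2n$ both are surjective, giving (via duality) $b_k(\g)=b_{k-1}(\fk)-b_{k+1}(\fk)$; and for $k=2n+1,2n+2$ the targets vanish, giving $b_k(\g)=b_{k-1}(\fk)$. The middle degree $k=n+1$ is the only place where the injective regime ($j=n-1$) and the surjective regime ($j=n$) meet: there $r_{n-1}=b_{n-1}(\fk)$ and $r_n=b_{n+2}(\fk)=b_{n-1}(\fk)$, which together with $b_n(\fk)=b_{n+1}(\fk)$ collapses the master formula to $b_{n+1}(\g)=2\big(b_{n+1}(\fk)-b_{n-1}(\fk)\big)$, accounting for the factor $2$.

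The crux, and the step I expect to be the main obstacle, is the hard-Lefschetz claim, since the clean symplectic statement lives on $\Wedge(\ak^*)$ rather than on $H^\bullet(\fk)$. To bridge this I would compute $H^\bullet(\fk)$ through the Hochschild--Serre spectral sequence of $0\to\ak\to\fk\to\langle z\rangle\to 0$: as $\ak$ is abelian and $z$ acts semisimply with eigenvalues $\pm1$, the sequence should collapse and identify $H^\bullet(\fk)$ with the $z$-weight-zero part of $\Wedge(\ak^*)$ together with its $\zeta$-shift. On that weight-zero part, multiplication by $[\eta]$ is precisely the operator $\phi_{k,k,n}(\omega)=\Omega_n\wedge\omega$ of Section~2, whose kernel dimensions $K(1,k,k,n)$ have already been determined there; this both supplies the injectivity/surjectivity thresholds and shows that the delicate upward shift of the surjectivity threshold from $n-1$ on $\ak$ to $n$ on $\fk$ — the true source of the asymmetry between the ranges $2\le k\le n$ and $n+2\le k\le 2n$ — is exactly the content of that kernel computation.
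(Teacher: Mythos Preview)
The paper does not prove this proposition at all: it is quoted from \cite{Pou05} and used as a black box, so there is no ``paper's own proof'' to compare against. Your outline via the Gysin long exact sequence of the central extension $0\to\zk\to\g\to\fk\to 0$ is the standard route and is essentially how Pouseele argues; the master formula $b_k(\g)=(b_k(\fk)-r_{k-2})+(b_{k-1}(\fk)-r_{k-1})$ and the subsequent case analysis, including the use of Poincar\'e duality for the unimodular $\fk$, are correct.

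The one genuine gap is in your last paragraph. In the generality of the proposition, $z$ need \emph{not} act semisimply on $\ak=\hk_{2n+1}/\zk$, let alone with eigenvalues $\pm1$; that hypothesis, and the appeal to the Section~2 computation of $\ker\phi_{k,k,n}$, are specific to the family $\g_{2n+2}$ and do not establish the hard-Lefschetz claim for arbitrary extensions. The fix is close to what you already wrote but does not need semisimplicity: because $\g$ acts trivially on $\zk$, the induced action of $z$ on $\ak$ lies in $\mathfrak{sp}(\ak,\eta)$, so the operator $\wedge\eta$ on $\Wedge^{\bullet}\ak^*$ is $z$-equivariant. Classical hard Lefschetz on the symplectic vector space $(\ak,\eta)$ gives that $\wedge\eta:\Wedge^j\ak^*\to\Wedge^{j+2}\ak^*$ is injective for $j\le n-1$ and surjective for $j\ge n-1$, and equivariance transfers this to both $(\Wedge^{\bullet}\ak^*)^{z}$ and $(\Wedge^{\bullet}\ak^*)_{z}$. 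The two-column Hochschild--Serre spectral sequence for $0\to\ak\to\fk\to\langle z\rangle\to 0$ always degenerates at $E_2$ (no room for differentials), giving a filtration of $H^j(\fk)$ with graded pieces $(\Wedge^j\ak^*)^z$ and $(\Wedge^{j-1}\ak^*)_z$; since $\cup[\eta]$ respects this filtration and acts as $\wedge\eta$ on the graded pieces, you obtain injectivity for $j\le n-1$ and surjectivity for $j\ge n$ on $H^{\bullet}(\fk)$ in full generality. With that correction your argument is complete.
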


It is easy to see that $\g_{2n+2}$ is an extension of the one-dimensional Lie algebra $\left\langle Y_0\right\rangle$ by $\hk_{2n+1}$. To calculate the Betti numbers of $\g_{2n+2}$ it needs to find the Betti numbers of the $2n+1$-dimensional Lie algebra $\fk$ with a basis $\{y,x_1,...,x_n,y_1,...,y_n\}$ and the Lie bracket 
\[ [y,x_i]=x_i,\ \ [y,y_i]=-y_i
\]
for all $1\leq i\leq n$.

Let $\{y^*,x_1^*,...,x_n^*,y_1^*,...,y_n^*\}$ be the dual basis of $\{y,x_1,...,x_n,y_1,...,y_n\}$.
\begin{prop}\hfill

\begin{enumerate}
	\item One has \[\partial_k\left(y^*\wedge \left(\Wedge{}^{k-1}(x_1^*,...,x_n^*,y_1^*,...,y_n^*)\right)\right)=0.
\]
\item Assume $j+l=k$ then we have
\begin{itemize}
	\item if $j=l$ then \[\partial_k\left(\Wedge{}^{j}(x_1^*,...,x_n^*)\otimes\Wedge{}^{l}(y_1^*,...,y_n^*)\right)=0,\]
	\item if $j\neq l$ then \[\partial_k\left( \Wedge{}^{j}(x_1^*,...,x_n^*)\otimes\Wedge{}^{l}(y_1^*,...,y_n^*)\right)=y^*\wedge \left(\Wedge{}^{j}(x_1^*,...,x_n^*)\otimes\Wedge{}^{l}(y_1^*,...,y_n^*)\right).\]
\end{itemize}
\end{enumerate}
\end{prop}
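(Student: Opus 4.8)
The plan is to work directly with the Chevalley--Eilenberg differential $\partial$ on $\Wedge(\fk^*)$, which is the unique antiderivation of degree $+1$ determined by its action on $1$-forms via $\partial\xi(Z,Z')=-\xi([Z,Z'])$ for $\xi\in\fk^*$, $Z,Z'\in\fk$. First I would record the values of $\partial$ on the dual basis. Since no bracket of $\fk$ has a component along $y$, one gets $\partial y^*=0$; and from $[y,x_i]=x_i$, $[y,y_i]=-y_i$ one computes $\partial x_i^*=-y^*\wedge x_i^*$ and $\partial y_i^*=y^*\wedge y_i^*$. The uniform feature to exploit is that applying $\partial$ to any of the generators $x_i^*,y_i^*$ produces $\pm\,y^*\wedge(\text{that same generator})$, so that every term of $\partial$ acquires exactly one factor of $y^*$.

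For part (1), I would use the antiderivation rule $\partial(y^*\wedge\eta)=\partial y^*\wedge\eta-y^*\wedge\partial\eta=-y^*\wedge\partial\eta$, valid because $y^*$ has degree $1$ and $\partial y^*=0$. Since $\eta\in\Wedge^{k-1}(x_1^*,\dots,x_n^*,y_1^*,\dots,y_n^*)$ is a combination of wedge monomials in the $x_i^*,y_i^*$, the observation above gives $\partial\eta=y^*\wedge\zeta$ for some $\zeta$. Hence $y^*\wedge\partial\eta=y^*\wedge y^*\wedge\zeta=0$, and $\partial_k(y^*\wedge\eta)=0$.

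For part (2), I would take a monomial $\omega=\xi_1\wedge\cdots\wedge\xi_k$ with each $\xi_s$ one of the $x^*$- or $y^*$-generators, so that $\partial\xi_s=\epsilon_s\,y^*\wedge\xi_s$ with $\epsilon_s=-1$ if $\xi_s$ is an $x^*$ and $\epsilon_s=+1$ if $\xi_s$ is a $y^*$. The antiderivation rule gives $\partial\omega=\sum_{s}(-1)^{s-1}\xi_1\wedge\cdots\wedge\partial\xi_s\wedge\cdots\wedge\xi_k$; transporting the single $y^*$ produced by $\partial\xi_s$ to the front costs another sign $(-1)^{s-1}$, so the two signs cancel and each term equals $\epsilon_s\,y^*\wedge\omega$. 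Summing, $\partial\omega=\bigl(\sum_s\epsilon_s\bigr)\,y^*\wedge\omega=(l-j)\,y^*\wedge\omega$, where $l$ and $j$ count the $y^*$- and $x^*$-factors. When $j=l$ the scalar vanishes, giving the first bullet; when $j\neq l$ the scalar $l-j$ is nonzero, so as $\omega$ ranges over a basis of $\Wedge^{j}(x_1^*,\dots,x_n^*)\otimes\Wedge^{l}(y_1^*,\dots,y_n^*)$ the images span $y^*\wedge\bigl(\Wedge^{j}(x_1^*,\dots,x_n^*)\otimes\Wedge^{l}(y_1^*,\dots,y_n^*)\bigr)$, which is the second bullet.

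The computation is essentially mechanical; the only point requiring genuine care is the sign bookkeeping in part (2), namely verifying that the sign $(-1)^{s-1}$ from the antiderivation expansion exactly cancels the sign incurred in moving $y^*$ past the first $s-1$ factors, so that the per-term contributions collapse to the clean scalar $l-j$. I expect this to be the main (though still routine) obstacle; the nonvanishing of $l-j$ when $j\neq l$ is then precisely what upgrades the statement \emph{$\partial\omega$ is a multiple of $y^*\wedge\omega$} to the asserted equality of subspaces.
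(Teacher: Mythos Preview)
Your proposal is correct and essentially identical to the paper's approach: both compute $\partial$ on a monomial $x_{i_1}^*\wedge\cdots\wedge x_{i_j}^*\wedge y_{r_1}^*\wedge\cdots\wedge y_{r_l}^*$ and obtain a scalar multiple of $y^*$ wedged with that same monomial, then read off the two cases $j=l$ and $j\neq l$. The paper records the scalar as $j-k$, which looks like a typo; your coefficient $l-j$ is the one that actually vanishes when $j=l$, and your sign bookkeeping is correct.
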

\begin{proof} The assertion (1) is obvious. For (2), we use the following computation:
\[\partial_k\left(x_{i_1}^*\wedge ...\wedge x_{i_j}^*\wedge y_{r_1}^*\wedge ...\wedge y_{r_l}^*\right) = (j-k) y^*\wedge x_{i_1}^*\wedge ...\wedge x_{i_j}^*\wedge y_{r_1}^*\wedge ...\wedge y_{r_l}\]
for all $1\leq i_1<...<i_j\leq n$ and $1\leq r_1<...<r_l\leq n$.
\end{proof}

It results the following corollary.
\begin{cor} The Betti numbers of $\fk$ is given as follows:
\[b_k(\fk) = \begin{pmatrix} n \\ \left[\frac{k}{2}\right]\end{pmatrix}\begin{pmatrix} n \\ \left[\frac{k}{2}\right]\end{pmatrix}
\]
where $[x]$ denotes the integer part of $x$.
\end{cor}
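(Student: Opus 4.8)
The plan is to diagonalize the differential by exploiting the bigrading of $\Wedge(\fk^*)$ supplied by the preceding Proposition. I would write every cochain in terms of the three blocks spanned by $y^*$, by $V'=\spa\{x_1^*,\dots,x_n^*\}$, and by $W'=\spa\{y_1^*,\dots,y_n^*\}$, so that $\Wedge(\fk^*)$ decomposes into the subspaces $A_{j,l}:=\Wedge^{j}(V')\otimes\Wedge^{l}(W')$ together with $B_{j,l}:=y^*\wedge A_{j,l}$. The Proposition then pins down $\partial$ completely: it annihilates every $B_{j,l}$ (by part (1)), it annihilates the diagonal blocks $A_{j,j}$ (part (2), first bullet), and on each off-diagonal block $A_{j,l}$ with $j\neq l$ it is a nonzero scalar multiple of wedging with $y^*$, hence an isomorphism $A_{j,l}\xrightarrow{\ \sim\ }B_{j,l}$ (part (2), second bullet). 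Since $\partial$ preserves the bidegree $(j,l)$ and only raises the $y^*$-grading, the cochain complex splits as a direct sum of the little complexes supported on fixed $(j,l)$.

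First I would dispatch the off-diagonal blocks: for each $j\neq l$ the two-term piece is the acyclic complex $0\to A_{j,l}\xrightarrow{\ \sim\ }B_{j,l}\to 0$, which contributes nothing to cohomology in any degree. All surviving cohomology is therefore concentrated in the diagonal blocks $A_{j,j}$ (total degree $2j$) and $B_{j,j}=y^*\wedge A_{j,j}$ (total degree $2j+1$), and I would next verify that these genuinely survive. Both lie in $\ker\partial$; neither is a coboundary, because every nonzero value of $\partial$ carries the factor $y^*$ and so lands in the $y^*$-part of $\Wedge(\fk^*)$, whence $A_{j,j}$ (which has no $y^*$) is never a coboundary, while $B_{j,j}$ fails to be one since its only possible bidegree-$(j,j)$ preimage is $A_{j,j}$, on which $\partial$ vanishes. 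Consequently $H^{2j}(\fk)\cong A_{j,j}$ and $H^{2j+1}(\fk)\cong B_{j,j}$, each of dimension $\binom{n}{j}^{2}$.

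Finally I would assemble the answer by parity: for even $k=2j$ one has $[k/2]=j$ and $b_k=\dim A_{j,j}=\binom{n}{j}^{2}$, while for odd $k=2j+1$ one has $[k/2]=j$ and $b_k=\dim B_{j,j}=\binom{n}{j}^{2}$, so in both cases $b_k=\binom{n}{[k/2]}^{2}$, as claimed. I do not anticipate a substantive obstacle, since once the Proposition is in hand this is essentially bookkeeping; the one point requiring genuine care is the non-coboundary claim for the diagonal blocks, i.e. confirming that nothing maps onto $A_{j,j}$ or $B_{j,j}$, and this is precisely where the splitting by bidegree and the vanishing statements of the Proposition do the work.
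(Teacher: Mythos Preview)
Your proposal is correct and is precisely the argument the paper has in mind: the paper states the Corollary as an immediate consequence of the preceding Proposition without further detail, and your bookkeeping with the blocks $A_{j,l}$ and $B_{j,l}=y^*\wedge A_{j,l}$ is exactly the intended way to read off $\ker\partial$ and $\operatorname{im}\partial$ from that Proposition. The only remark is that the isomorphism $A_{j,l}\xrightarrow{\sim}B_{j,l}$ for $j\neq l$ is not stated as such in the Proposition (only surjectivity is), but it follows at once from the explicit formula in its proof, as you note.
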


Applying this corollary, we have
\[b_k(\g_{2n+2}) =
\begin{cases}
1 \ \hspace{6.3cm}\text{for } k=0 \ \text{or }k=1, \\
\begin{pmatrix} n \\ \left[\frac{k}{2}\right]\end{pmatrix}\begin{pmatrix} n \\ \left[\frac{k}{2}\right]\end{pmatrix} - \begin{pmatrix} n \\ \left[\frac{k-2}{2}\right]\end{pmatrix}\begin{pmatrix} n \\ \left[\frac{k-2}{2}\right]\end{pmatrix} \ \hspace{1.1cm}\text{for } 2\leq k \leq n ,\\
2\begin{pmatrix} n \\ \left[\frac{n+1}{2}\right]\end{pmatrix}\begin{pmatrix} n \\ \left[\frac{n+1}{2}\right]\end{pmatrix} - 2\begin{pmatrix} n \\ \left[\frac{n-1}{2}\right]\end{pmatrix}\begin{pmatrix} n \\ \left[\frac{n-1}{2}\right]\end{pmatrix} \ \text{for } k=n+1, \\
\begin{pmatrix} n \\ \left[\frac{k-1}{2}\right]\end{pmatrix}\begin{pmatrix} n \\ \left[\frac{k-1}{2}\right]\end{pmatrix} - \begin{pmatrix} n \\ \left[\frac{k+1}{2}\right]\end{pmatrix}\begin{pmatrix} n \\ \left[\frac{k+1}{2}\right]\end{pmatrix} \hspace{0.55cm}\text{for } n+2\leq k \leq 2n,\\
1 \ \hspace{6.3cm}\text{for } k=2n+1 \ \text{or }k=2n+2.
\end{cases}\]
and then Theorem 2 is obtained.
\section{Appendix 2: The second cohomology group of a family of nilpotent Lie algebras}

In this appendix, in the progress of our work, we give the second cohomology of a family of nilpotent Lie algebras that are double extensions of an Abelian Lie algebra (see \cite{DPU12} for more details about these Lie algebras).

Let us denote $\g_{4n+2}$ a 2-nilpotent quadratic Lie algebra of dimension $4n+2$ spanned by $\{X,X_1,...,X_{2n},Y,Y_1,...,Y_{2n}\}$ where the Lie bracket is defined by $[Y,Y_{2i-1}]=X_{2i}$, $[Y,Y_{2i}]=-X_{2i-1}$, $[Y_{2i-1},Y_{2i}]=X$ and the bilinear form is given by $B(X,Y)=B(X_i,Y_i)=1$, zero otherwise. Let $\{\alpha,\alpha_i,\beta,\beta_i\}$ be the dual basis of $\{X,X_i,Y,Y_i\}$. We can check that the associated 3-form $I$ of $\g_{4n+2}$ is $I=\beta\wedge\Omega$ where $\Omega=\beta_1\wedge\beta_2+\beta_3\wedge\beta_4+...+\beta_{2n-1}\wedge\beta_{2n}$. Therefore, it is easy to see that $\iota_{\g_{4n+2}}(I)=\spa\{\Omega,\beta\wedge\beta_i\}$ for all $1\leq i\leq 2n$. We have the following proposition.

\begin{prop} $\dim(H^2(\g_{4n+2},\CC))=8$ if $n=1$ and $\dim(H^2(\g_{4n+2},\CC))=5n^2+n$ if $n>1$.
\end{prop}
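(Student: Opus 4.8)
The plan is to compute $H^2(\g_{4n+2},\CC)$ by the same super-Poisson-bracket method used in Section 2, exploiting the isomorphism $H^2(\g,\CC)\simeq Z^2/B^2$ where $Z^2=\{\Omega'\in\Wedge^2(\g^*)\mid\{I,\Omega'\}=0\}$ and $B^2=\iota_\g(I)$. Since $\iota_{\g_{4n+2}}(I)=\spa\{\Omega,\beta\wedge\beta_i\}$ is already given explicitly and has dimension $2n+1$, the whole task reduces to determining $\dim Z^2=\dim\ker(\partial_2)$ and then subtracting: $\dim H^2=\dim Z^2-(2n+1)$. So the first step is to write a general element of $\Wedge^2(\g^*)$ in a convenient basis and impose $\{I,\cdot\}=0$.

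The key computational step is to organize the $\binom{4n+2}{2}$-dimensional space $\Wedge^2(\g^*)$ into blocks according to how many of $\alpha,\beta$ appear, and to record the action of $\{I,\cdot\}$ with $I=\beta\wedge\Omega$ on each block. First I would compute the brackets $\{I,\alpha\wedge\beta\}$, $\{I,\alpha\wedge\alpha_i\}$, $\{I,\alpha\wedge\beta_i\}$, $\{I,\beta\wedge\alpha_i\}$, $\{I,\beta\wedge\beta_i\}$, $\{I,\alpha_i\wedge\alpha_j\}$, $\{I,\alpha_i\wedge\beta_j\}$, and $\{I,\beta_i\wedge\beta_j\}$ using the definition of the super Poisson bracket and the orthonormality data $B(X_i,Y_i)=1$. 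Because $\Omega$ is itself a sum of $\beta$-wedges, most of these brackets will either vanish or land in the three-dimensional pieces $\beta\wedge(\cdots)$, and the kernel conditions decouple block-by-block. I expect the forms that are automatically closed to include all the $\beta\wedge\alpha_i$, $\beta\wedge\beta_i$, the $\alpha_i\wedge\beta_j$ terms, and certain combinations involving $\Omega$, while the $\alpha\wedge\beta_i$ and $\alpha\wedge\alpha_i$ terms produce the nontrivial linear constraints whose rank must be pinned down.

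Once the rank of $\{I,\cdot\}$ on $\Wedge^2(\g^*)$ is found, $\dim Z^2$ follows by rank-nullity and $\dim H^2=\dim Z^2-(2n+1)$. The anticipated split into $n=1$ versus $n>1$ should come precisely from a degeneration in this rank computation: for $n=1$ there is only a single pair $\beta_1\wedge\beta_2$ in $\Omega$, so several constraint forms that are independent for $n>1$ collapse or vanish identically, giving the jump from the generic formula to the special value $8$. I would verify the two claimed values $\dim H^2=8$ (for $n=1$, where $4n+2=6$) and $\dim H^2=5n^2+n$ (for $n>1$) by carefully counting the surviving parameters in each block; a useful sanity check is to re-derive $n=1$ directly by hand on the $6$-dimensional algebra, since $\binom{6}{2}=15$ makes the kernel computation entirely explicit.

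The main obstacle I foresee is the bookkeeping in the $\alpha\wedge\alpha_i$ and $\alpha\wedge\beta_i$ blocks: there $\{I,\cdot\}$ mixes the index $i$ with the symplectic pairing built into $\Omega$ (pairing $\beta_{2i-1}$ with $\beta_{2i}$), so the constraint matrix is not diagonal but has a $2\times 2$ block structure coming from the pairs $(2i-1,2i)$. Correctly computing the rank of this structured matrix—and in particular seeing why it drops for $n=1$—is the delicate point; everything else is a routine, if lengthy, dimension count.
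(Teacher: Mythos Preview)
Your overall plan---compute $\{I,\cdot\}$ on a basis of $\Wedge^2(\g^*)$, read off $Z^2=\ker(\partial_2)$, and subtract $\dim B^2=2n+1$---is exactly the paper's approach.

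However, your expectation that ``the $\alpha_i\wedge\beta_j$ terms'' are automatically closed is wrong, and this mislocates the real work. In fact $\{I,\alpha_i\wedge\beta_j\}$ and $\{I,\alpha_i\wedge\alpha_j\}$ are generically nonzero (for instance $\{I,\alpha_{2i-1}\wedge\beta_{2j-1}\}=-\beta\wedge\beta_{2i}\wedge\beta_{2j-1}$), so the $2\times 2$ symplectic block structure you anticipate lives in the $\alpha_i\wedge\beta_j$ and $\alpha_i\wedge\alpha_j$ sectors, not in the $\alpha\wedge(\cdot)$ sector. For $n>1$ the kernel consists of $V\wedge V$ (with $V=\spa\{\beta,\beta_1,\dots,\beta_{2n}\}$), the forms $\beta\wedge\alpha_i$, the single combination $\alpha\wedge\beta-\sum_i\alpha_{2i-1}\wedge\beta_{2i-1}$, and the $\spk$-type combinations
\[
\alpha_{2i-1}\wedge\beta_{2j}+\alpha_{2j-1}\wedge\beta_{2i},\qquad
\alpha_{2i-1}\wedge\beta_{2j-1}-\alpha_{2j}\wedge\beta_{2i},\qquad
\alpha_{2i}\wedge\beta_{2j-1}+\alpha_{2j}\wedge\beta_{2i-1}.
\]
Counting these gives $5n^2+n$ after subtracting $2n+1$. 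The $n=1$ exception arises because then $\Omega=\beta_1\wedge\beta_2$ is a single wedge, so $\beta_i\wedge\Omega=0$ and hence $\{I,\alpha\wedge\beta_i\}=0$ for $i=1,2$; these two extra closed forms account for $8$ in place of the generic value $5n^2+n\big|_{n=1}=6$. Your plan will still succeed once you actually carry out the bracket computations, but be prepared for the substantive linear algebra to appear in a different block than you predicted.
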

\begin{proof}
First we need describe $\ker(\partial_2)$. Let $V$ be the space spanned by $\{\beta,\beta_1,...,\beta_{2n}\}$ then $\{I,\omega\}=0$ for all $\omega\in V\wedge V$. By a straightforward computation, we have

\begin{enumerate}
	\item $\{I,\beta\wedge\alpha_i\}=\{I,\alpha_{2i-1}\wedge \beta_{2i}\} = \{I,\alpha_{2i}\wedge \beta_{2i-1}\}=0$,
	\item $\{ I,\alpha\wedge\beta\}=I$,
	\item $\{I,\alpha\wedge\beta_{2i-1}\}=\beta_{2i-1}\wedge\Omega$, $\{I,\alpha\wedge\beta_{2i}\}=\beta_{2i}\wedge\Omega$,
	\item $\{I,\alpha\wedge\alpha_{2i-1}\}=\alpha_{2i-1}\wedge\Omega+\beta\wedge\beta_{2i}\wedge\alpha$, $\{I,\alpha\wedge\alpha_{2i}\}=\alpha_{2i}\wedge\Omega-\beta\wedge\beta_{2i-1}\wedge\alpha$,
	\item $\{I,\alpha_{2i-1}\wedge\alpha_{2j}\}=-\beta\wedge\beta_{2i}\wedge\alpha_{2j}-\beta\wedge\beta_{2j-1}\wedge\alpha_{2i-1}$, $\{I,\alpha_{2i}\wedge\alpha_{2j}\}=\beta\wedge\beta_{2i-1}\wedge\alpha_{2j}-\beta\wedge\beta_{2j-1}\wedge\alpha_{2i}$,
	\item $\{I,\alpha_{2i-1}\wedge\beta_{2j}\}=-\{I,\alpha_{2j-1}\wedge\beta_{2i}\}=-\beta\wedge\beta_{2i}\wedge\beta_{2j}$, $i\neq j$,
	\item $\{I,\alpha_{2i-1}\wedge\beta_{2j-1}\}=\{I,\alpha_{2j}\wedge\beta_{2i}\}=-\beta\wedge\beta_{2i}\wedge\beta_{2j-1}$, 
	\item $\{I,\alpha_{2i}\wedge\beta_{2j-1}\}=-\{I,\alpha_{2j}\wedge\beta_{2i-1}\}=\beta\wedge\beta_{2i-1}\wedge\beta_{2j-1}$, $i\neq j$.
\end{enumerate}
 
As a consequence, if $n=1$ then  it is direct that \[\ker(\partial_2)=V\wedge V\oplus\spa\{\beta\wedge\alpha_1,\beta\wedge\alpha_2,\alpha\wedge\beta-\alpha_1\wedge\beta_1,\alpha_1\wedge\beta_2,\alpha_1\wedge\beta_1-\alpha_2\wedge\beta_2,\alpha_2\wedge\beta_1\}.\] Therefore, we obtain $\dim(H^2(\g_{4n+2},\CC))=8$. 

In the case $n>1$ then $\Omega$ is indecomposable. Hence, \[\ker(\partial_2)=V\wedge V\oplus\spa\{\beta\wedge\alpha_{2i-1},\beta\wedge\alpha_{2i},\alpha\wedge\beta-\sum_{i=1}^n\alpha_{2i-1}\wedge\beta_{2i-1},\]\[\alpha_{2i-1}\wedge\beta_{2j}+\alpha_{2j-1}\wedge\beta_{2i},\alpha_{2i-1}\wedge\beta_{2j-1}-\alpha_{2j}\wedge\beta_{2i},\alpha_{2i}\wedge\beta_{2j-1}+\alpha_{2j}\wedge\beta_{2i-1}\}\] with $1\leq i,j\leq n$ and it is easy to check that $\dim(H^2(\g_{4n+2},\CC))=5n^2+n$.
\end{proof}

\end{document}